\newtheorem{theorem}{Theorem}[section]
\newtheorem*{theorem*}{Theorem}
\newtheorem{proposition}[theorem]{Proposition}
\newtheorem{lemma}[theorem]{Lemma}
\newtheorem{example}[theorem]{Example}
\newtheorem{corollary}[theorem]{Corollary}
\newtheorem{remark}[theorem]{Remark}
\newtheorem{problem}[theorem]{Problem}
\theoremstyle{definition}%%斜体防止
\newtheorem*{acknowledgement*}{Acknowledgements}
\newtheorem*{problem*}{Problem}
\theoremstyle{definition}%%斜体防止
\newtheorem{definition}[theorem]{Definition}
\newcommand*{\rom}[1]{\expandafter\@slowromancap\romannumeral #1@}
\newcommand\stackrqarrow[2]{%
    \mathrel{\stackunder[2pt]{\stackon[4pt]{$\rightsquigarrow$}{$\scriptscriptstyle#1$}}{%
            $\scriptscriptstyle#2$}}}
\title{ Persistent pairs and connectedness in discrete Morse functions on simplicial complex \rom{1}}
\author{Chong Zheng}
\address{Faculty of  Science and Engineering, Waseda University, 
Ohkubo, Shinkuju-ku, Tokyo, 169-8555 Japan}
\email{c{\_}zheng@aoni.waseda.jp}
\keywords{Discrete Morse theory, Strong connection, Homology, Persistent pairs, Euler characteristic.}
\begin{document}

\begin{abstract}

In this paper,  we study some useful properties of persistent pairs in
a discrete Morse function on a simplicial complex $K$. 
In case of $\dim K=1$ (i.e., a graph), 
by using the properties, 
we characterize 
strongly connectedness of critical simplices 
between two distinct 
discrete Morse functions, 
and relate the number of such pairs to the Euler characteristic of $K$. 

\end{abstract}
\maketitle

\section{Introduction}
Introduced by R. Forman \cite{Forman_guide}, 
discrete Morse theory is  a combinatorial 
version of standard Morse theory.
The main idea of discrete Morse theory is 
to collapse pairs of simplices in such 
a way that the homotopy type of 
the space remains consistent, 
as
similar to the elementary collapse theory invented
by J.H.C. Whitehead \cite{jhc}.
 A \textit{discrete Morse function} 
 is  a real-valued function 
 assigned to the set of simplices.
Also, a discrete Morse function corresponds
 to a \textit{gradient vector field},
in which non-critical simplices are paired with each other.
We can reduce the number of simplices in a complex
by collapsing the pairs in a gradient vector field.
Hence, discrete Morse theory is widely used for
the computation of the homology of spaces.
The main theorems of discrete Morse theory 
are first formulated for 
CW complexes,
and have since been expanded to
other structures,
including simplicial complexes \cite{Discrete Morse Theory on Simplicial Complexes}, 
graphs \cite{Discrete Morse theory on graphs},
and posets \cite{poset}.
This approach has proven to be useful 
in studying the topology of spaces and has found numerous 
applications in various areas, notably in the calculation of
persistent homology \cite{homology}.

King, Knudson, and Mramor Kosta \cite{birth and death} introduced
 the concept of 
 \textit{birth and death theory} in
  discrete Morse theory,
which has been applied to 
several interesting data analysis problems.
 In their framework,
given a CW complex and a series of discrete Morse functions,
they define a \textit{connectedness} relationship between two critical simplices 
in different gradient vector fields.
A series of organized discrete Morse functions on the 
same cellulation
divides space into slices in which critical 
simplices appear and disappear.
As time varies,  they trace  critical points via a 
discrete version of a bifurcation diagram.
The diagram provides information 
on the birth and death of each critical simplex.
The persistent time of each critical simplex
reflects the geometric information of
the  space under the given discrete Morse 
functions.

In this paper, 
we apply persistence pairs 
to discuss various problems related to  \textit{connectedness} and
\textit{strong connectedness}, including one posed in 
\cite{birth and death}.
A
\textit{persistence pair}, 
is a pair of simplices used 
to depict the birth and death of a homology class with respect to 
ascending indices of the filtration of the simplicial complex.
The concept of persistence pair can be found 
in \cite{Discrete Morse Theory scoville, bauer}.
Furthermore,  a similar concept called \textit{homological sequence} 
is
discussed in 
the case of  graphs 
in Section 3 and 5 of  Ayala et al \cite{Discrete Morse theory on graphs}.
Compared with \cite{Discrete Morse theory on graphs} 
and \cite{chari},
in which the authors investigate the number of 
 discrete Morse functions on a graph,
our primary objective  in this paper
 is to use persistence pairs to 
study the  strong connectedness between 
 different discrete Morse functions on a graph.

Section 2 provides a 
brief summary of the definitions and results in discrete Morse theory. 
In Section 3,
we investigate persistence pairs
of discrete Morse functions,  and formalize several properties 
of persistence pairs in terms of simplicial complexes.
In Section 4,
we discuss the 
idea of connectedness between multiple discrete Morse 
functions and 
investigate several problems regarding to the connectedness, including
 uniqueness of strong connectedness and gradient paths.
Moreover,  in Theorem \ref{main theorem}, 
we study the relationship 
between the number of 
strong connectedness pairs and the Euler characteristic of a graph.  Precisely, 
 \begin{theorem*}

Let $G$ be a  graph
and 
$$f_1, f_2 : G \to \mathbb{R}$$
be discrete Morse functions.
Let $A_q^{f_1,f_2}(G)$ be the number of strongly connected pairs of 
$q$-dimensional simplices between $f_1$ and $f_2$,  $q=0,1$.
Then, 
$$A_0^{f_1,f_2}(G)- A_1^{f_1,f_2}(G) =\beta_0(G)- \beta_1(G)=\chi(G),$$
where $\chi(G)$ is the  Euler characteristic of the graph $G$.
 
 \end{theorem*}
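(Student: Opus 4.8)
The plan is to reduce the identity to the discrete Morse equation on a graph, by showing that strongly connected pairs count exactly the critical simplices that persist from $f_1$ to $f_2$, while every critical simplex that fails to persist is accounted for by a birth-or-death event simultaneously involving one $0$-simplex and one $1$-simplex. As the first ingredient I would record the discrete Morse equation: for any discrete Morse function $f$ on $G$ one has $m_0(f)-m_1(f)=\chi(G)$, where $m_q(f)$ denotes the number of critical $q$-simplices. This is immediate from the structure of a gradient vector field, since every non-critical simplex lies in a unique gradient pair consisting of one vertex and one incident edge; the paired vertices and the paired edges are therefore equinumerous and drop out of the alternating count $V-E=\chi(G)$.

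Next I would use the persistence-pair properties developed in Section 3 to interpret strong connectedness. The key claim is that a critical $q$-simplex of $f_1$ is strongly connected to a critical $q$-simplex of $f_2$ exactly when the two represent the same critical point followed along a path of discrete Morse functions joining $f_1$ to $f_2$; in particular strong connectedness preserves dimension and, by the uniqueness established in Section 4, determines a bijection between the persistent critical $q$-simplices of $f_1$ and those of $f_2$. Hence $A_q^{f_1,f_2}(G)$ equals the common number $P_q$ of persistent critical $q$-simplices. The critical simplices of $f_1$ that are not persistent are precisely those cancelled on the way to the persistent core, and in a graph each cancellation destroys one critical $0$-simplex together with one critical $1$-simplex, since a discrete cancellation in dimension $q$ requires a critical $q$-simplex and a critical $(q+1)$-simplex joined by a gradient path, and $q=0$ is the only admissible index. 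Writing $D$ for the number of such death events, I obtain $m_0(f_1)=P_0+D$ and $m_1(f_1)=P_1+D$, whence
$$A_0^{f_1,f_2}(G)-A_1^{f_1,f_2}(G)=P_0-P_1=m_0(f_1)-m_1(f_1)=\chi(G);$$
the equality $\chi(G)=\beta_0(G)-\beta_1(G)$ is the standard expression of the Euler characteristic of a graph in terms of its Betti numbers.

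The main obstacle is the second step: rigorously establishing the dimension-preserving bijection between strong connectedness and persistence, and in particular verifying that each strongly connected simplex has a unique partner. This demands a careful analysis of how critical simplices evolve and cancel along a family of discrete Morse functions, and it is exactly the uniqueness of strong connectedness that Section 4 is designed to provide. Once that correspondence and the pairing of each birth-death event across dimensions $0$ and $1$ are in hand, the alternating-sum cancellation is purely formal.
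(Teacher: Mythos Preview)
Your argument rests on a claim that is false: you assert that strong connectedness determines a \emph{bijection} between the ``persistent'' critical $q$-simplices of $f_1$ and those of $f_2$, and that Section~4 supplies the required uniqueness. For $q=0$ uniqueness does hold (Proposition~\ref{0-dim case}), but for $q=1$ on a general graph it fails. The paper's own Example~4.11 exhibits a single $f_1$-critical edge $e_1^3$ that is strongly connected to \emph{two} distinct $f_2$-critical edges $e_2^2$ and $e_2^4$, while another $f_1$-critical edge $e_1^2$ is strongly connected to nothing. Thus $A_1^{f_1,f_2}(G)$ is genuinely a count of \emph{pairs}, not the cardinality of a set of ``persistent'' simplices, and your identity $m_1(f_1)=P_1+D$ with $P_1=A_1$ collapses.

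Because uniqueness fails, the cancellation bookkeeping you propose (each non-persistent critical vertex paired with one non-persistent critical edge) no longer accounts for $A_0-A_1$. The paper avoids this trap entirely: rather than trying to match critical simplices of $f_1$ with those of $f_2$, it decomposes $G$ into the rooted trees $T^{f_1}_{v}$ of the $f_1$-gradient field, lists the $f_2$-critical simplices along each gradient path and branch, and checks case by case that the contribution of each segment to $A_0-A_1$ equals the local difference $(\#\text{$f_2$-critical vertices})-(\#\text{$f_2$-critical edges})$. Summing over the disjoint rooted trees then yields $C_0^{f_2}-C_1^{f_2}=\chi(G)$. Any repair of your approach would have to explain why the over-count from an edge with multiple strong partners is exactly offset by edges with none, and that is essentially the same local case analysis the paper carries out.
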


%%%%%%%%%%%%%%%%%%%%%%%%%%%%%%%%%%%%%%%%%%%%%%%%%%%%
\section{Prelimanaries}
This section is devoted to briefly summarize a 
theoretical basis of discrete Morse theory.
For details, the readers may refer to 
\cite{Morse Theory for Cell Complexes} and \cite{Forman_guide}.
Additional resources on discrete Morse theory 
can be found in textbooks such as \cite{Discrete Morse Theory scoville}
and 
\cite{Discrete Morse Theory Kozlov}.
This paper includes some basic definitions and
facts about homology theory,
which is 
a fundamental field in algebraic topology 
and has wide applications in
various fields.
Some of the details regarding homology theory are 
left undefined and unproven.
For details on basic homology theory, 
one refers to \cite{Munkres}.

A \textit{simplicial complex} $K$ is 
a collection of simplices such that
 every face of a simplex of $K$ is in $K$,
 and the intersection of any two simplices of $K$ is a face of each of them.
By $K_p$ we denote the set of simplices 	 
of dimension $p$.
The \textit{dimension} of $K$ is the
maximum dimension among its simplices.

We denote the chain complex  of $K$ with coefficient in a 
 finite field $\mathbb{F}$ 
 by $C_{\ast}(K;\mathbb{F}):= \bigoplus_q C_q(K; \mathbb{F})$
 with boundary operator $\partial_{\ast}$.
We denote by $B_q(K):=\text{im}(\partial_{q+1})$,
and  $Z_q(K):=\text{ker}(\partial_q)$. 
We then define the \textit{$q$-th homology group} of $K$ as
$H_q(K):= Z_{q}(K)/ B_q(K)$.
The rank of $H_q$ is called the $q$-th \textit{Betti number}. 
Each element $[h]\in H_q$ is called a \textit{homology class}.
A chain $c\in Z_q$ is called a \textit{representative chain} of $[h]$,
if $c$ is in the homology class $[h]$.
Two chains $h_1, h_2 \in C_q$ are said to be \textit{homologous} if 
they both are  representative chains of the same homology class.

Now we introduce the fundamental definitions and theorems 
of discrete Morse theory.
\begin{definition}

A \textit{discrete Morse function} on $K$ is a real-valued 
function 
$$f: K\longrightarrow \mathbb{R}$$
satisfying for all
$\sigma \in K_p$,
\begin{enumerate}
\item $\displaystyle \# \{\tau^{(p+1)} \succ \sigma | f(\tau) \leq f(\sigma)\} \leq 1;$
\item 
$\displaystyle  \# \{v^{(p-1)} \prec \sigma | f(\sigma) \leq f(v)\} \leq 1,$
\end{enumerate}
where $\#$ denotes the cardinality of a set.
Notations $v \prec \sigma$  and  $\tau  \succ  \sigma$ mean  
that \textit{$v $ is a face of $\sigma$} and \textit{$\sigma$ is a face of $\tau $},
respectively.
Also, we use the notation $\sigma^{(p)}$ to represent that the dimension 
of simplex $\sigma$ is $p$.
\end{definition}

\begin{definition}
Given a discrete Morse function $f$
on $K$,
we say that a simplex
$\sigma \in K_p$ is a 
\textit{critical} $p$-dimensional simplex (of $f$)
if
\begin{enumerate}
\item 
$\displaystyle\# \{\tau^{(p+1)} \succ \sigma \, | \, f(\tau) \leq f(\sigma)\} =0;$
\item 
$\displaystyle\# \{v^{(p-1)} \prec \sigma \, | \, f(\sigma) \leq f(v)\} =0.$
\end{enumerate}
If $\sigma$ is a critical simplex,
then we call $f(\sigma)$ a \textit{critical value} of $f$.
Conversely, if a simplex $\sigma$ is not critical, 
then
we say that $\sigma$ is a \textit{non-critical} simplex,
and $f(\sigma)$ is a \textit{non-critical value}.

A discrete Morse function is called 
\textit{excellent}, if 
its all critical values are different. 
In this paper, 
we always assume that the discrete Morse functions 
under consideration are excellent without loss of generality,
because we can always perturb the real values to 
ensure distinct real values.

\end{definition}

\begin{definition}
Let $f$ be a discrete Morse function.
A \textit{gradient vector field} $V$ corresponding to $f$ is 
the collection of pairs 
$(\alpha^{(p)}, \beta^{(p+1)})$
satisfying 
$\alpha^{(p)} \prec \beta^{(p+1)}$ and
$f(\alpha) \geq f(\beta)$.
We call such a pair $(\alpha^{(p)}, \beta^{(p+1)})$
a \textit{gradient pair}.
Given a gradient vector field $V$,
a  \textit{gradient $V$-path} is a sequence 
of simplices
$$\alpha_0^{(p)}, \beta_0^{(p+1)}, \alpha_1^{(p)}, \beta_1^{(p+1)},\cdots, \beta_r^{(p+1)}, \alpha_{r+1}^{(p)} $$
such that for
each $i= 0, \cdots,r $,
$(\alpha^{(p)}, \beta^{(p+1)})\in V$
and
$\beta_i^{(p+1)} \succ \alpha_{i+1}^{(p)} \neq \alpha_i^{(p)}.$

\end{definition}

By Definitions 2.2 and 2.3,
 a  simplex is in the gradient vector field if 
and only if it is 
non-critical. 
Consequently,  if a simplex is $f$-critical (non-critical), it can also be referred to as
 \textit{$V$-critical (non-critical)}.

We denote a \textit{gradient  $V$-path} $\gamma$ 
consisting of $p$ and $(p+1)$-dimensional 
simplices
from 
$\alpha_0^{(p)} $ to 
$\alpha_{r+1}^{(p)}$
by 
$$\gamma: \alpha_0^{(p)}  \stackrqarrow{V(p,p+1)}{}  \alpha_{r+1}^{(p)}. $$
Note that we can also take any $(p+1)$-dimensional simplex
$\beta_0^{(p+1)}$ as the starting simplex, and
$\beta_r^{(p+1)}$ as the ending simplex.
Thus, we can denote the corresponding 
path consisting of $p$- and $(p+1)$-dimensional simplices
by 
$$\gamma: \beta_0^{(p+1)} \stackrqarrow{V(p,p+1)}{}  \beta_r^{(p+1)}. $$

We say that two gradient paths $\gamma_1,\gamma_2$ are the \textit{same path} 
if all elements of two paths are the same, 
denoted $\gamma_1=\gamma_2$.
A \textit{trivial path} is a gradient $V$-path that consists of only one simplex. 
%We say such a path
%is a 
%\textit{non-trivial closed} path
%if $r\geq0$ 
%and $\alpha_0=\alpha_{r+1}$.

\begin{definition}
\label{subcomplex}
For $c\in \mathbb{R}$,
we define the \textit{sub-level complex }
$$ K(c)= \bigcup_{\sigma\in K , f(\sigma)\leq c } \bigcup_{\tau \leq \sigma} \tau.$$
$K(c)$ is the sub-complex of $K$ consisting of all simplices 
$\tau$ with 
$f(\tau)\leq c$ and their faces.
\end{definition}

The followings are fundamental theorems 	in discrete Morse theory.
Proofs can be found in \cite{Forman_guide, Morse Theory for Cell Complexes}.

\begin{theorem}[\protect{\cite[Thm. 3.3]{Morse Theory for Cell Complexes}}]

If $a<b$ are real numbers such that $[a, b]$ contains no
critical values of f, then
$K(b)$ is homotopy equivalent to $K(a)$.
\label{collapse theorem}

\end{theorem}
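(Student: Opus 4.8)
The plan is to prove the stronger statement that $K(a)$ is obtained from $K(b)$ by a finite sequence of elementary collapses, written $K(b)\searrow K(a)$; since each elementary collapse is a deformation retraction, this gives a (simple) homotopy equivalence $K(b)\simeq K(a)$, which is what is asserted. First I would perturb $f$ so that it is injective — the same kind of perturbation used to pass to an excellent function, and one that changes neither the gradient vector field nor the sublevel complexes $K(c)$ for $c$ near $a$ and $b$ — so that the values of distinct simplices can be compared unambiguously. Since $a<b$ we clearly have $K(a)\subseteq K(b)$, and the whole problem is to understand, and then to collapse away, the simplices of $K(b)\setminus K(a)$.

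For a simplex $\sigma$ put $m(\sigma)=\min\{f(\rho):\rho\succeq\sigma\}$, the least value taken on $\sigma$ together with its cofaces. Because $K(c)$ is closed under taking faces (Definition~\ref{subcomplex}), one checks directly that $\sigma\in K(c)$ if and only if $m(\sigma)\le c$; hence $K(b)\setminus K(a)=\{\sigma: a<m(\sigma)\le b\}$. A critical simplex $\kappa$ has $m(\kappa)=f(\kappa)$, since by Definition 2.2 every coface of $\kappa$ has strictly larger value, so the hypothesis that $[a,b]$ contains no critical value of $f$ forces $K(b)\setminus K(a)$ to contain no critical simplex. Consequently every simplex of $K(b)\setminus K(a)$ is non-critical and therefore, as noted after Definition 2.3, lies in the gradient vector field $V$: it is matched in a unique gradient pair $(\alpha^{(p)},\beta^{(p+1)})$ with $\alpha\prec\beta$ and $f(\alpha)\ge f(\beta)$. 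I would then verify that the two members of such a pair are removed together, using $m(\alpha)\le m(\beta)=f(\beta)$, the equality holding because $\beta$, being matched downward with $\alpha$, has no coface of smaller value.

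Next I would linearly order the gradient pairs of $K(b)\setminus K(a)$ and peel them off one at a time. Collapsing a pair $(\alpha,\beta)$ as an elementary collapse requires that $\alpha$ be a free face of $\beta$ in the current complex, i.e. that $\beta$ be the unique simplex still present that properly contains $\alpha$. Morse condition (1) of Definition 2.1 already guarantees that $\beta$ is the only codimension-one coface of $\alpha$ with value at most $f(\alpha)$, every other codimension-one coface having strictly larger value; what remains is to choose the order of collapse — by decreasing entrance level $m$, and within one level by decreasing dimension — so that, at the moment $(\alpha,\beta)$ is processed, all of the remaining cofaces of $\alpha$ have already been removed. When $\dim K=1$, which is the case this paper actually uses, $\alpha$ is a vertex whose only present coface is the edge $\beta$, and freeness is immediate.

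The step I expect to be the main obstacle is exactly this freeness bookkeeping in dimensions at least two: a simplex $\alpha$ may be a proper face of a higher-dimensional simplex carrying a smaller value, so that $\alpha$ can enter a sublevel complex strictly before its partner $\beta$ and can fail to be a free face under a naive decreasing-value ordering. Controlling these higher-codimension cofaces — equivalently, showing that the ordering by entrance level does present each pair as a genuine elementary collapse — is where the Morse conditions and the gradient-path structure of Definition 2.3 must be used carefully, and this is essentially the content of Forman's original crossing-a-single-value argument in \cite{Morse Theory for Cell Complexes}. Once every pair has been realized as an elementary collapse, composing them exhibits $K(b)\searrow K(a)$, and since each elementary collapse is a homotopy equivalence we conclude $K(b)\simeq K(a)$.
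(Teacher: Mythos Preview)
The paper does not give its own proof of this theorem: it is stated as a cited result, with the sentence ``Proofs can be found in \cite{Forman_guide, Morse Theory for Cell Complexes}'' immediately preceding it, and no argument follows. So there is no in-paper proof to compare against; the relevant benchmark is Forman's original argument that the paper points to.

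Your outline is precisely that argument: pass to an injective Morse function, identify $K(b)\setminus K(a)$ via the entrance level $m(\sigma)$, observe that it consists only of matched pairs, and collapse those pairs in a suitable order. Your claim $m(\beta)=f(\beta)$ is justified by the standard exclusion lemma (a simplex cannot simultaneously have a face of larger value and a coface of smaller value), and you correctly flag the one nontrivial point---that higher-codimension cofaces could in principle make $\alpha$ enter the filtration before $\beta$ and spoil freeness---as exactly the place where Forman's ``cross one value at a time'' analysis does the work. For the graph case $\dim K=1$ that this paper actually uses, your remark that freeness is immediate is correct and already gives a complete proof. In short, your proposal matches the source the paper cites; there is nothing in the paper itself to contrast it with.
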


%\begin{theorem}[\cite{Forman_guide}, Theorem 3.5]
%\label{no_cycle_theorem}
%A discrete vector field $V$ is the gradient vector field of a discrete Morse function if 
%and only if there are no non-trivial closed $V$-paths.

%\end{theorem}

\begin{theorem}[\protect{\cite[Thm. 2.11]{Forman_guide}}]
\label{Morse_inequalities}
Let $C_q$ be the number of critical simplices of dimension $q$.
Then,
\begin{enumerate}
\label{c>b}
\item
for any 
$i\in \mathbb{N}$,
$$C_i -C_{i-1}+\cdots \pm C_0 \geq \beta_i - \beta_{i-1} + \cdots \pm \beta_0,$$
where $\beta_q$ is the $q$-th Betti number. 
\vspace{0.5cm}
\item  for any $i\in \mathbb{N}$,
$$C_i\geq \beta_i.$$

\item $\chi(K)= \beta_0 -\beta_1 +\beta_2 -\cdots \pm \beta_{dim K}
=C_0 -C_1+C_2-\cdots \pm C_{dim K},$
\vspace{0.5cm}
where $\chi(K)$ is the Euler characteristic number of $K$.
\end{enumerate}
\end{theorem}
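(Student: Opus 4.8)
The plan is to reduce the statement to a purely algebraic fact about finite chain complexes, once the homological content of discrete Morse theory has been extracted. The single substantive input is the Morse-complex construction: starting from Theorem \ref{collapse theorem}, I would track how the homotopy type of the sublevel complexes $K(c)$ changes as $c$ increases. Crossing a non-critical value changes nothing by Theorem \ref{collapse theorem}, while crossing the critical value of a $p$-dimensional critical simplex has the effect of attaching a single $p$-cell up to homotopy. Iterating over all critical values (here excellence is used, so the critical values are distinct and can be crossed one at a time) produces a CW complex $\mathcal{M}$ homotopy equivalent to $K$ with exactly $C_p$ cells in dimension $p$. This is the deep step and is precisely the content of Forman's main theorem; in a self-contained account it would carry essentially all the geometric weight.

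Granting that, the cellular chain complex of $\mathcal{M}$ with coefficients in $\mathbb{F}$ is a complex of $\mathbb{F}$-vector spaces $\mathcal{M}_q$ with $\dim_{\mathbb{F}}\mathcal{M}_q = C_q$ and $H_q(\mathcal{M})\cong H_q(K)$, so that $\dim_{\mathbb{F}}H_q(\mathcal{M}) = \beta_q$. From here everything is linear algebra. Writing $z_q = \dim\ker\partial_q$ and $b_q = \dim\operatorname{im}\partial_{q+1}$, the rank--nullity theorem gives
\[
C_q = z_q + b_{q-1}, \qquad \beta_q = z_q - b_q,
\]
with the convention $b_{-1}=0$, whence $C_q-\beta_q = b_{q-1}+b_q \geq 0$.

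For the strong inequalities (1), I introduce the alternating partial sums
\[
M_i = \sum_{q=0}^{i}(-1)^{\,i-q}C_q, \qquad N_i = \sum_{q=0}^{i}(-1)^{\,i-q}\beta_q,
\]
and a telescoping computation using $C_q-\beta_q=b_{q-1}+b_q$ collapses $M_i-N_i$ to the single term $b_i$, so $M_i - N_i = b_i \geq 0$, which is exactly (1). For the weak inequalities (2), I would use the elementary identities $M_i+M_{i-1}=C_i$ and $N_i+N_{i-1}=\beta_i$ (all intermediate terms cancel), so that $C_i-\beta_i = (M_i-N_i)+(M_{i-1}-N_{i-1}) = b_i+b_{i-1}\geq 0$; thus (2) follows formally from (1). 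For (3), take $i=\dim K$: since there are no simplices of dimension $\dim K+1$ we have $b_{\dim K}=0$, so the inequality in (1) becomes the equality $M_{\dim K}=N_{\dim K}$. Multiplying through by $(-1)^{\dim K}$ turns this into $\sum_q(-1)^q C_q = \sum_q(-1)^q\beta_q$, and the right-hand side is $\chi(K)$, giving (3).

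The one genuine obstacle is the Morse-complex step, where discrete Morse theory is actually invoked; I would either cite Forman's theorem directly or reconstruct it from Theorem \ref{collapse theorem} by induction on critical values, checking that crossing a single critical $p$-simplex attaches one $p$-cell up to homotopy. Everything after that is bookkeeping with rank--nullity and the telescoping sums above, and I expect no difficulty there.
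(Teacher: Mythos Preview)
Your argument is correct and is essentially the classical proof: build the Morse complex (one cell per critical simplex, same homotopy type as $K$) and then do rank--nullity bookkeeping on its cellular chain complex. The telescoping computation $M_i-N_i=b_i$ is right, and the deduction of (2) and (3) from it is clean.

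However, note that the paper does not actually prove Theorem~\ref{Morse_inequalities}: it is stated as a cited result from Forman, with the remark ``Proofs can be found in \cite{Forman_guide, Morse Theory for Cell Complexes}.'' So there is no ``paper's own proof'' to compare against here. What the paper \emph{does} do, in Section~3, is revisit these inequalities through the lens of persistence pairs: Corollary~\ref{new morse inequality 1} gives the exact identity $C_q=\beta_q+\#\hat P_{q-1}+\#\hat P_q$, and Corollary~\ref{new morse inequality 2} then recovers the strong inequality as the equality $M_i-N_i=\#\hat P_i$. This is a genuinely different route from yours: rather than passing through the Morse CW model and rank--nullity, the paper counts birth--death pairs directly in the filtration, so the ``defect'' term $b_i$ in your computation is identified concretely as the number of finite $i$-dimensional persistence pairs. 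Your approach is the standard homological one and requires the Morse-complex theorem as a black box; the paper's persistence-pair approach is more combinatorial and stays at the level of the filtration, trading the CW-attachment argument for the classification of critical simplices in Theorem~3.6.
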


The inequalities above are commonly referred to as the
 \textit{weak discrete Morse inequality} and  the \textit{strong discrete Morse inequality},
respectively.

%%%%%%%%%%%%%%%%%%%%%%%%%%%%%%%%%%%%%%%%%%%%%%%%%%%%%%%%

%%%%%%%%%%%%%%%%%%%%%%%%%%%%%%%%%%%%%%%%%%%%%%%%%%%%%%%%

\section{Persistence pairs of discrete Morse functions }

Given a discrete Morse function
$f: K\to \mathbb{R}$, we recall
the concept of a \textit{persistence pair}, which consists
 of two critical simplices of $f$
 and describes the birth and death of a homology class.
Then,  we study some useful properties of persistence pairs.
Some of them are not complicated when we consider them 
at the 
level of discrete Morse complexes, see Forman \cite{Forman_guide} 
 and Section 3.1 of Bauer \cite{ bauer}.
 Nonetheless, 
 in order to thoroughly examine the connectivity and strong connectivity
 between critical simplices of different discrete Morse functions,   
 which is our primary goal in Section 4, 
 a clear description at the level of simplicial complex $K$
 is necessary.

We first define the \textit{filtration} associated to $f$
 of a simplcial complex 
and its homology in discrete Morse theory
by
 applying Definition \ref{subcomplex}.
Recall we always assume discrete Morse functions 
to be excellent.

 \begin{definition}
 
Given a discrete Morse function
$f: K\to \mathbb{R}$,
let $c_1< c_2 < \cdots < c_n$
be a sequence of increasing real numbers
and  $K(c_i)$ be the sub-level complex at value $c_i$.
We call 
$$ K(c_1) \subset K(c_2) \subset \cdots \subset K(c_n)$$
  a \textit{filtration} of  $K$ associated to $f$,
  where $K(c_n)=K$.
For convenience, 
if $f(\sigma)= c$, 
 then we  use both $K(c)$ and $K(\sigma)$ to 
represent the sub-level complex at value $c$ or 
$f(\sigma)$.

 \end{definition}
\begin{comment}
\begin{example}
Figure 1
shows  a $2$-dimensional simplicial complex $K$ with a discrete 
Morse function on it.
The number corresponds to the function values of  each simplex.
Note that $1$ is a $0$-dimensional critical value,
and $7$ is a $2$-dimensional critical value.
There is a filtration 
$$K(1)\subset K(2) \subset \cdots \subset K(7)= K$$
of this function.
By definition,
$$K(2)=\{ 1,2,3 \}= K(3), $$
and
$$K(6)= \{1,2,3,4,5,6,7 \}= K(7)=K.$$

\end{example}

\begin{figure}[h]

	\begin{picture}(120,120)
		\put(10,10){
		\begin{tikzpicture}
			%% vertices
			\draw[fill=black] (0,1.732) circle (3pt);
			\draw[fill=black] (1,0) circle (3pt);
			\draw[fill=black] (-1,0) circle (3pt);
			
			%% vertex labels
		%	\node at (-0.5,0) {1};
		%	\node at (4.5,0) {2};
		%	\node at (2.5,1.2) {3};
		%	\node at (2,3.3) {4};
			%%% edges
			\draw[thick] (1,0) -- (-1,0) -- (0,1.732)-- (1,0);
			
			\node at (0,-0.34) {$4$};			
			\node at (-0.8,0.9) {$2$};
			\node at (0.8,0.9) {$6$};

			\node at (1.1,-0.3) {$5$};			
			\node at (-1.1,-0.3) {$3$};
			\node at (0.1,2) {$1$};
			
			\node at (-0,0.85) {$7$};
			\filldraw[opacity=.3, blue] (1,0) -- (-1,0) -- (0,1.732)-- (1,0);

		\end{tikzpicture}}
		\label{fig:filtration}
		\end{picture}
		
\caption{A discrete Morse function on a 2-dimensional simplicial complex}		
\end{figure}

\end{comment}

Note that if a discrete Morse 
function 
$f: K \to \mathbb{R}$ is restricted to a sub-complex $K_i$, then 
$f|_{K_i}$ is still a discrete Morse function.
A discrete Morse function provides local information
of  $K$.
Specifically,
whether a simplex of $K$ is critical or not 
remains consistent when we  consider each sub-complex $K(c_i)$ of $K$.
This property is formalized in the following lemma.

\begin{lemma}
Let 
$ K(c_1) \subset K(c_2) \subset \cdots \subset K(c_n)$
be a filtration of  $K$ associated to $f$.
If $\sigma$ is a critical (or non-critical) simplex of $K(c_i)$ for some $i$,
then $\sigma$ is also a critical (or non-critical) simplex of $K(c_j)$
 for all $j>i. $

\end{lemma}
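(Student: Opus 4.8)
The plan is to treat criticality as a purely local condition and to show that all the data governing it are already frozen at the stage $K(c_i)$. By the definition of a critical simplex (Definition 2.2), whether $\sigma$ is critical inside a subcomplex $L\ni\sigma$ depends only on the $f$-values of its codimension-one faces and codimension-one cofaces that lie in $L$. Since a subcomplex is closed under passing to faces, every face of $\sigma$ already lies in $K(c_i)$, hence in each $K(c_j)$ with $j>i$; so the face condition (the second clause of Definition 2.2) is computed identically in $K(c_i)$ and in $K(c_j)$. This immediately disposes of the non-critical case: a simplex that is non-critical in $K(c_i)$ carries a witnessing codimension-one coface $\tau$ with $f(\tau)\le f(\sigma)$, or a codimension-one face $v$ with $f(v)\ge f(\sigma)$, already inside $K(c_i)\subseteq K(c_j)$, and this witness keeps it non-critical in $K(c_j)$. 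The whole difficulty is thus concentrated in the critical case, where I must prevent a \emph{new} codimension-one coface $\tau$ with $f(\tau)\le f(\sigma)$ from entering as I enlarge $K(c_i)$ to $K(c_j)$.

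The hard part is that $K(c_i)$ is in general strictly larger than the naive sublevel set $\{\,\sigma : f(\sigma)\le c_i\,\}$: closing up under faces can force simplices of value exceeding $c_i$ into $K(c_i)$, so a priori a simplex critical in $K(c_i)$ might have $f(\sigma)>c_i$, leaving room for a coface with $c_i<f(\tau)\le f(\sigma)$ to appear only at a later stage. The crux of the proof is therefore a value bound: \emph{if $\sigma$ is critical in $K(c_i)$, then $f(\sigma)\le c_i$.} Once this is available the critical case closes at once, for then any codimension-one coface $\tau$ with $f(\tau)\le f(\sigma)$ satisfies $f(\tau)\le c_i$ and so already lies in $K(c_i)$ by Definition \ref{subcomplex}; no violating coface can be new, the coface condition (the first clause of Definition 2.2) survives, and $\sigma$ stays critical in $K(c_j)$.

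To prove the value bound I would argue by contradiction: suppose $\sigma$ is critical in $K(c_i)$ yet $f(\sigma)>c_i$. Then by Definition \ref{subcomplex} there is a simplex $\rho$ with $f(\rho)\le c_i$ of which $\sigma$ is a face, and since $f(\sigma)>c_i\ge f(\rho)$ the simplex $\sigma$ is a proper face of $\rho$ with $f(\rho)<f(\sigma)$. I then prove, by induction on the codimension $d=\dim\rho-\dim\sigma\ge 1$, that whenever $\sigma$ is a proper face of $\rho$ with $f(\rho)<f(\sigma)$, the simplex $\sigma$ has a codimension-one coface $\tau$ with $\tau\le\rho$ and $f(\tau)\le f(\sigma)$. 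For $d=1$ one takes $\tau=\rho$. For $d\ge 2$, the codimension-one faces of $\rho$ that still contain $\sigma$ are exactly $d$ in number, one obtained by deleting each vertex of $\rho$ lying outside $\sigma$; by the defining inequality of a discrete Morse function applied to $\rho$ (the second clause of Definition 2.1), at most one of them can have value $\ge f(\rho)$, so some such face $\eta$ satisfies $f(\eta)<f(\rho)<f(\sigma)$, and the induction hypothesis applied to the pair $(\sigma,\eta)$ of codimension $d-1$ yields the required $\tau\le\eta\le\rho$. The coface $\tau$ so produced lies in $K(c_i)$ (being a face of $\rho$) and has $f(\tau)\le f(\sigma)$, contradicting the first clause of Definition 2.2 for $\sigma$; hence $f(\sigma)\le c_i$, as needed.

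I expect this inductive statement to be the only genuinely delicate point, as it is exactly where the simplicial closure defining the sublevel complex interacts with the combinatorics of $f$; the remaining steps are bookkeeping. I would stress that the argument consumes only the defining inequalities of a discrete Morse function --- in fact only the clause of Definition 2.1 bounding the number of codimension-one faces of value at least $f(\rho)$ --- together with the elementary count that a proper face of codimension $d$ is contained in exactly $d$ codimension-one faces of the ambient simplex, and that it requires neither the gradient pairing nor any of the structure theory recalled in Section 2.
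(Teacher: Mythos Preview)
Your proof is correct and, in one important respect, more careful than the paper's. Both arguments share the same skeleton: the face condition (clause (2) of Definition~2.2) is stable because all faces of $\sigma$ are already in $K(c_i)$, and the non-critical case follows immediately because a witness to non-criticality in $K(c_i)$ persists in $K(c_j)\supseteq K(c_i)$. The difference lies in the coface condition for the critical case. The paper simply asserts that ``the second statement still holds for all $\tau^{(q+1)}\in K(c_j)$, since $j>i$,'' without further justification; taken literally, this is not a proof, because a priori a new codimension-one coface of $\sigma$ could enter at a later stage with $f(\tau)\le f(\sigma)$. You correctly isolate the missing ingredient --- the value bound $f(\sigma)\le c_i$ for any $\sigma$ critical in $K(c_i)$ --- and supply a clean inductive proof of it using only clause~(2) of Definition~2.1 and the elementary count of codimension-one faces of $\rho$ containing $\sigma$. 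Once $f(\sigma)\le c_i$ is known, any $\tau\notin K(c_i)$ satisfies $f(\tau)>c_i\ge f(\sigma)$ and the coface condition is automatic, exactly as you say. So your argument and the paper's coincide in outline, but yours actually closes the step the paper leaves open; the induction on codimension is precisely the extra idea needed, and it is worth keeping.
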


\begin{proof}
To prove the critical case, 
we let $\sigma$
be a $q$-dimensional critical simplex in $K(c_i)$
and  $j>i$.
Since $\sigma$
is a critical simplex of $K(c_i)$,
by definition,
\begin{enumerate}
\item  for any $\alpha^{(q-1)}\in K(c_i)$ with $\alpha^{(q-1)}\prec \sigma$,
$f(\alpha)<f(\sigma)$;

\item  
for any $\tau^{(q+1)} \in K(c_i)$ with $\tau^{(q+1)}\succ \sigma$,
$f(\tau) > f(\sigma).$
\end{enumerate}
The second statement still holds for all $\tau^{(q+1)}\in K(c_j)$,
since $j>i$.
Furthermore,
by the definition of sub-level complex,
since $\sigma$ is  in $K(c_i)$,
all the faces of $\sigma$ are also in $K(c_i)$.
Hence, 
statement (1) still holds for $K(c_j)$.

A similar proof can also be applied to the non-critical case.

\end{proof}
Note that the statement in Lemma 3.2 may fail to hold for 
general objects other than simplicial complexes. 
 In paper \cite{poset},  the authors define the 
 discrete Morse theory for posets, 
 and a similar statement  to Lemma 3.2 for posets 
 does not hold.

By applying the simplicial homology functor to the filtration, 
we obtain a sequence of homology groups and inclusion-induced
 homomorphisms. 
For each homology class, 
we can use a pair of simplices to describe its birth and death.
A homology class $[h]$ is said to \textit{be born} at 	
a certain $f$-value $c_i$,
if 
$[h]$ is not in the image of $H_{\star} (K(c_{i-1}))$. 
On the other hand, 
a homology class $[h]$ 
is said to \textit{persist} at  $c_i$, 
if the image
of $[h]$ in $H_{\star} (K(c_{i+1}))$ is non-zero, 
otherwise, 
it is said to \textit{die} at  $c_i$.
If there exists some simplex $\sigma$ with
$f(\sigma)=c_i$, 
then we also say that 
the homology class $[h]$
is born at $\sigma$, or dies at $\sigma$.
To precisely describe the persistence of a cycle,
\textit{the elder rule} is applied.
The elder rule is to let the 
death simplex always be paired
 with the youngest birth simplex. 

\begin{definition}
A pair of simplices
$(\sigma, \tau)$ is called a \textit{persistence pair} if 
there is a homology class 
$[h]$
that is born at $\sigma$ and dies at $\tau$.
 $\sigma$ is referred to as the \textit{birth simplex}, 
 and $\tau$ is referred to as the \textit{death simplex} of $[h]$.
In the case where $[h]$ is born at $\sigma$ and does not die,
 we say that $\sigma$ is \textit{paired with infinity},
denoted $(\sigma, \infty)$.

\end{definition}

This  is a well-defined definition because we 
assume the discrete Morse function being
excellent and the elder rule is applied.
Note that in \cite{bauer},
the birth simplex and death simplex are also 
called the \textit{positive
cell} and \textit{negative cell}, respectively.

We call the dimension of a homology class
 $[h]$ to be  the 
dimension of  the simplices of a representative chain.
Then, the following statements about persistent pairs 
hold true under the conditions of discrete Morse 
function and the elder rule.
\begin{proposition}
\label{1st_cell}
Let $f$ be a discrete Morse function on $K$.
Suppose that  $[h]$ is a $q$-dimensional homology class with persistent pair 
$(\sigma, \tau)$.
Then 
\begin{enumerate}
\item $ \dim \sigma =q$;

\item $\dim \tau =q+1$;

\item $[h]$ is represented by a cycle 
$z=\sum_i n_i  \sigma_i $
such that
$\sigma= \sigma_j$,
for some $j$.
\end{enumerate}

\end{proposition}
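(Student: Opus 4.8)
The plan is to reduce the three claims to the standard birth--death dichotomy of persistence by passing to a simplexwise refinement of the given filtration. First I would refine the sub-level filtration $K(c_1)\subset\cdots\subset K(c_n)=K$ to one that adjoins a single simplex at each stage,
$$\varnothing = K^0 \subset K^1 \subset \cdots \subset K^N = K, \qquad K^j = K^{j-1}\cup\{s_j\},$$
ordering the simplices by their entry value into the sub-level filtration and breaking ties by increasing dimension, so that every face of $s_j$ already lies in $K^{j-1}$ and each $K^j$ is a genuine sub-complex. Since this refinement only inserts intermediate stages while retaining each $K(c_i)$, the birth and death $f$-values of every homology class, and hence the persistence pairs produced by the elder rule, are unchanged; in particular the birth simplex $\sigma$ and the death simplex $\tau$ of $[h]$ are exactly the simplices adjoined at the steps where $[h]$ appears and disappears.

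Next I would record the elementary effect of attaching one simplex. When $s_j$ of dimension $d$ is adjoined to $K^{j-1}$, its boundary $\partial s_j$ is a $(d-1)$-cycle already lying in $K^{j-1}$, and exactly one of two things occurs: either $\partial s_j$ is a boundary in $K^{j-1}$, in which case a new $d$-dimensional cycle appears and $s_j$ is a birth (positive) simplex; or $\partial s_j$ is not a boundary in $K^{j-1}$, in which case the class $[\partial s_j]\in H_{d-1}(K^{j-1})$ is destroyed and $s_j$ is a death (negative) simplex killing a $(d-1)$-dimensional class. This is the usual rank--nullity computation for $\partial$ restricted to $C_d(K^j)$, made especially clean here because we work over a field $\mathbb{F}$.

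With this in hand the three statements follow at once. As $\sigma$ is the birth simplex of the $q$-dimensional class $[h]$ it must be positive, and a positive simplex of dimension $d$ gives birth to a $d$-dimensional class; hence $d=\dim\sigma=q$, proving (1). Writing $\partial\sigma=\partial c$ for some $c\in C_q(K^{j-1})$, which is possible precisely because $\sigma$ is positive, the chain
$$z \;=\; \sigma - c \;\in\; Z_q(K^j)$$
is a cycle whose coefficient on $\sigma$ equals $1\neq 0$ (as $\sigma\notin K^{j-1}$ forces $c$ to have zero coefficient there), and $[z]$ is exactly the class born at $\sigma$, namely $[h]$; this proves (3). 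Finally, since $\tau$ is the death simplex of the $q$-dimensional class $[h]$ it must be negative, and a negative simplex of dimension $e$ kills a class of dimension $e-1$; as it kills $[h]$ we get $e-1=q$, i.e. $\dim\tau=q+1$, proving (2).

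The step I expect to be the main obstacle is the first one: verifying that the simplexwise refinement is simultaneously well-defined (faces genuinely precede cofaces, even though within a gradient pair $(\alpha^{(p)},\beta^{(p+1)})$ the coface $\beta$ carries the smaller $f$-value) and persistence-preserving, so that the paper's births and deaths in the sub-level filtration coincide with the positive and negative simplices of the refinement. Here I would exploit the sub-level complex definition, which already places each face of $\beta$ in $K(f(\beta))$ irrespective of its own value, to assign consistent entry times, and I would invoke Theorem \ref{collapse theorem} to ensure that no homology is created or destroyed across the non-critical stages, so that the essential births and deaths are carried by the critical simplices $\sigma$ and $\tau$, as the definition of a persistence pair requires.
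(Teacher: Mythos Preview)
Your proof is correct but takes a genuinely different route from the paper. The paper argues each item by direct contradiction using the defining inequalities of a discrete Morse function: assuming $\dim\sigma>q$, it locates $q$-dimensional faces $\alpha\prec\sigma$ that must occur in a representative of $[h]$, and then extracts from the simplex $\sigma$ intermediate $(q+1)$-faces whose $f$-values force a violation of the Morse condition; the case of $\tau$ is handled analogously, and (3) falls out once (1) and (2) are in place. You instead bypass these case analyses entirely by refining the sub-level filtration to a simplexwise one and invoking the standard positive/negative dichotomy of persistence over a field. What your approach buys is conceptual clarity and a clean connection to well-known persistence machinery---and, as a bonus, combining it with Theorem~\ref{collapse theorem} essentially delivers Proposition~\ref{codimension 1} in the same breath; what the paper's approach buys is that it never leaves the sub-level filtration, so no refinement needs to be constructed or justified. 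One small imprecision in your argument for (3): the class born at the step adjoining $\sigma$ is only determined modulo the image of $H_q(K^{j-1})$, so $[h]$ need not equal your $[z]$ on the nose; but since any cycle in $K^j$ not involving $\sigma$ already lies in $K^{j-1}$ and hence represents a class in that image, every representative of $[h]$ must contain $\sigma$, and the conclusion stands.
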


\begin{proof}
To prove the first statement,
since the $\dim \sigma$
can not be smaller than $q$,
we assume the opposite,  that is,  $\dim \sigma > q.$

Since $[h]$ is born at 
$\sigma$,
there exists at least one $\alpha\prec \sigma$,
such that $\dim \alpha =q$, and $\alpha$ is a part of some representative chain of 
$[h]$.
We write the chain $z$ as 
$z= \sum_i n_i \alpha_i,$
where there exists at least one index $j$ such that $\alpha=\alpha_j$
and $n_j=1.$

If there are multiple such $\alpha$ satisfying the conditions, 
without loss of generality,  we label them as
$\alpha_1, \cdots, \alpha_K$.
Considering any two $\alpha_k\prec \sigma$ and $\alpha_l\prec \sigma$ of them,
since 
$\sigma$ is a simplex, 
by definition,
there exists a $(q+1)$-dimensional simplex $\delta$ with  $a_k$ and $a_l$ 
as faces. 
Since $[h]$ is born at 
$\sigma$,
we have
$f(\sigma)< f(\delta^{(q+1)}) < f(a_k^{(q)}),$
and 
$f(\sigma)< f(\delta^{(q+1)}) < f(a_j^{(q)}),$
which contradicts
the definition of a discrete Morse function.

On the other hand, if there is only one such $\alpha$,
since $\sigma$ is a simplex, 
there exists a $(q+1)$-dimensional face $\delta^{(q+1)}\prec \sigma$,
such that $\alpha \prec \delta$ and $f(\alpha) > f(\delta)$.
Let $z= \sum_i n_i \alpha_i$ be a representative chain of $[h]$ with
 $\alpha= \alpha_j$ and $n_j=1$ for some index $j$. 
Since $f$ is a discrete Morse function,  by definition,
$f(\alpha) > f(\delta) > f(\tilde{\alpha}) $,
for any $\tilde{\alpha} \neq \alpha$ in the boundary of $\delta$. 
Let $\tilde{z}$ be the chain consisting of all elements  in $z$ 
except $\alpha$,
and all elements of the boundary of $ \delta$ except $\alpha$.
 Then $z$ is homologous to $\tilde{z}$,
contradicting
 the fact that $[h]$ is born at $\sigma$
as $\tilde{z}$ appears earlier than $\sigma$.

A similar proof also applies to statement (2).
The third statement is straightforward after proving (1) and (2).
\end{proof}

\begin{proposition}
\label{codimension 1}
Let $f$ be a discrete Morse function.
If $(\sigma^{(q)}, \tau^{(q+1)} )$ is a persistence pair  of $f$,
then both $\sigma$ and  $\tau$ are $f$-critical.

\end{proposition}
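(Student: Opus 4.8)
The plan is to reduce the claim to the principle that a homology class can be born or die only at a critical value, and then to invoke the collapse theorem (Theorem~\ref{collapse theorem}) to rule out births and deaths at non-critical values. I treat $\sigma$ and $\tau$ separately but by mirror-image arguments. For $\sigma$, suppose toward a contradiction that it is non-critical and put $c=f(\sigma)$. Since we assume $f$ excellent, and indeed injective after perturbation, the value $c$ is attained only by $\sigma$ and is therefore a non-critical value. Let $c^{-}$ be the largest $f$-value strictly below $c$ and choose $a$ with $c^{-}<a<c$, so that $K(a)=K(c^{-})$ and the interval $[a,c]$ contains no critical value. Theorem~\ref{collapse theorem} then makes the inclusion $K(c^{-})\hookrightarrow K(c)$ a homotopy equivalence, hence induces an isomorphism $H_{\ast}(K(c^{-}))\xrightarrow{\ \cong\ }H_{\ast}(K(c))$; in particular it is surjective, so every class of $H_{\ast}(K(c))$ already comes from $K(c^{-})$. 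This contradicts $[h]$ being born at $\sigma$, so $\sigma$ must be critical.

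The argument for $\tau$ is symmetric. Assuming $\tau$ non-critical and writing $d=f(\tau)$, the value $d$ is non-critical, and the same application of Theorem~\ref{collapse theorem} shows that the inclusion $K(d^{-})\hookrightarrow K(d)$ induces an isomorphism on homology, which in particular is injective. Thus no nonzero class of $H_{\ast}(K(d^{-}))$ can be sent to zero in $H_{\ast}(K(d))$, so nothing dies at $d$, contradicting the assumption that $[h]$ dies at $\tau$. Hence $\tau$ is critical as well.

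The step I expect to be the main obstacle is ensuring that Theorem~\ref{collapse theorem} genuinely applies to the single filtration step $K(c^{-})\hookrightarrow K(c)$. Because $K(c)$ is defined so as to contain every face of each simplex of value at most $c$, crossing a non-critical value may introduce $\sigma$ together with its gradient partner of higher value, and one has to verify that the only $f$-value lying in $[a,c]$ is the non-critical value $c$, so that no critical value is swept. It is equally important to record that the equivalence in Theorem~\ref{collapse theorem} is realized by elementary collapses and is therefore induced by the inclusion map, which is precisely what lets the induced homomorphisms---rather than merely abstract isomorphisms---govern births and deaths. Once these points are secured, the surjectivity/injectivity dichotomy above delivers both conclusions; note that the dimensions $\dim\sigma=q$ and $\dim\tau=q+1$ are already furnished by Proposition~\ref{1st_cell} and play no further role here.
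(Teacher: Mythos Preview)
Your proof is correct and takes a genuinely different route from the paper's. The paper argues combinatorially at the level of simplices: invoking Proposition~\ref{1st_cell}(3), it picks a representative cycle $z=\sum_i n_i\sigma_i$ with $\sigma=\sigma_j$ having the largest $f$-value, and then checks the two defining conditions of criticality directly---each $(q-1)$-face $\alpha\prec\sigma$ is shared with some other $\sigma_k$ in the cycle, forcing $f(\alpha)<f(\sigma)$, and similarly for cofaces; the argument for $\tau$ proceeds analogously via its boundary. Your argument instead works one level up, at the homotopy/homology of the filtration: a non-critical value cannot change the homotopy type by Theorem~\ref{collapse theorem}, so the inclusion-induced map on homology is an isomorphism and nothing can be born or die there. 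This is cleaner and more conceptual, and it makes transparent \emph{why} persistence pairs must consist of critical simplices; it also generalizes immediately to any setting where the collapse theorem holds. The paper's approach, by contrast, is self-contained at the simplicial level and does not appeal to the collapse theorem, which fits its stated aim of giving descriptions ``at the level of simplicial complex $K$'' rather than passing through homotopy equivalences. Your cautionary paragraph is on point: the key technical inputs are injectivity of $f$ (which the paper allows via perturbation, so that $c=f(\sigma)$ is the unique value in $(c^{-},c]$) and the fact that the equivalence in Theorem~\ref{collapse theorem} is realized by collapses, hence by the inclusion, so that it is the filtration maps themselves that become isomorphisms.
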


 \begin{proof}
 Suppose that $(\sigma^{(q)}, \tau^{(q+1)} )$ is a persistence pair.
To prove $\sigma$ is $f$-critical,
consider any $(q-1)$-dimensional face
  $\alpha^{(q-1)} \prec \sigma$.
By Proposition \ref{1st_cell} (3),
there exists a cycle 
$z= \sum_i n_i \sigma_i,$
with $\sigma_j=\sigma, n_i=1$, for some index $j$.
Since $\sigma$ is a simplex,
any $(q-1)$-dimensional face
  $\alpha^{(q-1)} \prec \sigma$ is the intersection of  
$\sigma$ and $\sigma_k$ for some $k$ in the cycle $z$.
Note that $f(\sigma) > f(\sigma_k),$
as $z$ is born at $\sigma$.
Thus, by the definition of discrete Morse function, 
$f(\sigma)>f(\alpha).$
The proof for the 
statement that 
for any $(q+1)$-dimensional simplex $\delta^{(q+1)}$ 
such that $\delta^{(q+1)} \succ \sigma$, 
it has a larger $f$-value than $\sigma$, 
is similar to the proof given in Proposition \ref{1st_cell}.

Next, we prove that $\tau$ is $f$-critical.
By Proposition \ref{1st_cell} (2),
we have
$\dim \tau =q+1.$
Suppose $ \rho^{(q)} \prec \tau$.
Since $(\sigma, \tau )$ is a persistence pair,
every element $\rho_i $ of the boundary of $ (\tau)$ satisfies
$f(\tau)> f(\rho_i).$
The rest is similar to the proof given in Proposition \ref{1st_cell}.
 \end{proof}
 
According to  Proposition \ref{codimension 1},  we can classify critical simplices
based on the structure of their persistence pairs.
We define the \textit{dimension} of a persistence pair as the 
dimension of its birth simplex.

\begin{theorem}
Let $\sigma^{(q)}$ be a critical simplex.
Then, $\sigma^{(q)}$ is in exactly 
one  of the three following kinds of
persistence pairs:
\begin{enumerate}
\item  a $q$-dimensional persistence pair 
$(\sigma^{(q)}, \tau^{(q+1)})$;
\item  a $(q-1)$-dimensional persistence pair $(\alpha^{(q-1)},\sigma^{(q)})$;

\item  a $q$-dimensional persistence pair consisting of  infinity 
$(\sigma^{(q)}, \infty)$. 
\end{enumerate}
\end{theorem}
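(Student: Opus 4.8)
The plan is to prove the statement in two halves: first that $\sigma^{(q)}$ lies in at least one of the three listed kinds of pairs (existence), and then that it lies in at most one of them (exclusivity), which together give \emph{exactly one}. The engine for both halves is the elementary dichotomy that introducing a single critical cell into the filtration alters the homology in exactly one way. By Theorem \ref{collapse theorem} the homology of the filtration is constant between consecutive critical values, so all homological change is concentrated at critical values; passing the value $f(\sigma)$ introduces the single critical simplex $\sigma^{(q)}$. I would record the effect of this passage by a rank--nullity computation. Writing $L$ for the sub-level complex just below $f(\sigma)$ and $L'=L\cup\{\sigma\}$, the chain $\partial\sigma$ is always a $(q-1)$-cycle, and either $[\partial\sigma]=0$ in $H_{q-1}(L)$ or $[\partial\sigma]\neq 0$. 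In the first case there is a $q$-chain $c$ in $L$ with $\partial c=\partial\sigma$, so $\sigma-c$ is a new $q$-cycle, $\operatorname{rank} H_q$ increases by one, and $\sigma$ is a \emph{birth} simplex; in the second case $\partial\sigma$ becomes a boundary in $L'$, $\operatorname{rank} H_{q-1}$ drops by one, and $\sigma$ is a \emph{death} simplex. These two alternatives are mutually exclusive and exhaustive.

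Translating this dichotomy into persistence pairs is then routine given the earlier results. If $\sigma$ is a birth simplex it is the birth simplex of a single $q$-dimensional class $[h]$; by the elder rule this class has a well-defined fate, so either it dies at a uniquely determined later simplex $\tau$, which by Proposition \ref{1st_cell}(2) and Proposition \ref{codimension 1} is a critical $(q+1)$-simplex, giving case (1); or it never dies, giving case (3), namely $(\sigma,\infty)$. If instead $\sigma$ is a death simplex, then the $(q-1)$-class it kills was born earlier, and the elder rule pairs $\sigma$ with the unique youngest birth simplex $\alpha$ among the classes being merged; by Proposition \ref{1st_cell}(1) and Proposition \ref{codimension 1} this $\alpha$ is a critical $(q-1)$-simplex, giving case (2), namely $(\alpha^{(q-1)},\sigma)$. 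Thus $\sigma$ always lies in at least one of the three kinds.

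For exclusivity I would argue as follows. Cases (1) and (3) both require $\sigma$ to be a birth simplex, while case (2) requires it to be a death simplex, and the rank--nullity dichotomy forbids $\sigma$ from being both; this separates (2) from (1) and (3). Within the birth alternative, $\sigma$ creates exactly one homology generator, whose death (finite or infinite) is pinned down uniquely by the elder rule, already noted to be well defined after Definition 3.3, so (1) and (3) cannot both occur and neither can occur twice. Within the death alternative the same well-definedness assigns $\sigma$ a single partner $\alpha$, so case (2) is a single pair. Combining existence with exclusivity yields the conclusion.

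The step I expect to be the main obstacle is making the reduction to a single-cell change fully rigorous at the level of the simplicial complex $K$. Between consecutive critical values the filtration may acquire several non-critical simplices, which occur in gradient pairs $(\alpha,\beta)$ whose homological contributions cancel, and I must confirm that these contribute no genuine births or deaths, so that the only class born or killed at $f(\sigma)$ is the one attributable to $\sigma$ itself. This is precisely what Theorem \ref{collapse theorem} together with Proposition \ref{codimension 1} guarantees, but the bookkeeping that turns ``homology is unchanged across non-critical values'' into ``$\sigma$ accounts for exactly one unit of homological change in a single dimension'' is the delicate part; I would carry it out carefully, for instance by refining the filtration to a simplexwise one compatible with $f$ (legitimate after perturbing $f$ to be injective) and tracking the ranks of $Z_\ast$ and $B_\ast$ one simplex at a time.
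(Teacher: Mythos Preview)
Your argument is correct, and in fact it is considerably more detailed than what the paper provides. The paper states this theorem immediately after Proposition~\ref{codimension 1} with the single prefatory sentence ``According to Proposition~\ref{codimension 1}, we can classify critical simplices based on the structure of their persistence pairs,'' and then gives no proof at all. The theorem is treated as an immediate consequence of the preceding propositions together with the definition of persistence pairs and the elder rule.

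Your approach via the birth/death dichotomy --- analyzing whether $[\partial\sigma]$ vanishes in $H_{q-1}$ of the sub-level complex just below $f(\sigma)$ --- is the standard mechanism underlying persistence pairing, and it makes explicit exactly what the paper leaves implicit. The worry you flag about multiple non-critical simplices entering between consecutive critical values is legitimate and is handled, as you say, by Theorem~\ref{collapse theorem}; refining to a simplexwise filtration is a clean way to execute the bookkeeping. So there is no gap in your proposal; rather, you have supplied the argument the paper omits.
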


Note that in \cite{Discrete Morse theory on graphs}, when $K$ is a graph, 
the type (3) critical simplices are referred to as
 \textit{essential vertices} and \textit{essential edges} 
 when $q = 0$ and $q = 1$, respectively. 
The other types of critical simplices are called \textit{superfluous}. 
 
A gradient pair  provides information locally
because it only captures of the ordering
 information of the neighbourhood
of an element. 
A persistent pair, on the other hand, contains
 global order information for a specific dimension,
hence it provides global information.
Notably,
only  critical simplices paired with 
infinity contribute to the homology  of  the underlying space,
as those cycles persist until the end of the filtration.
Recall  Theorem \ref{Morse_inequalities}, the Morse inequalities, 
which establishes
a connection between the number of $q$-dimensional
critical simplices and  $q$-th Betti number.
Therefore,  as a corollary, 
we can use the structures of persistence pairs to 
re-write the Morse inequalities as equations,
providing a more refined understanding of the relationship 
between critical simplices and homology.
A similar result for the graph case is given in 
\cite{Discrete Morse theory on graphs}.

Let $P_q$ be the set of persistence pairs whose 
birth simplices have dimension $q$.
 Let $\hat{P}_q$  be the set of persistence pairs whose 
birth simplices have dimension $q$
and death simplices are not infinity.
Then, $\hat{P}_q \subset P_q,$
and the $q$-th Betti number is given by
$\beta_q= \# P_q- \#\hat{P}_q. $

\begin{corollary}
\label{new morse inequality 1}
The  Morse inequality in Theorem \ref{c>b} can be re-written as an equation
$$C_q = \beta_q + \#\hat{P}_{q-1} + \#\hat{P}_q.$$

\end{corollary}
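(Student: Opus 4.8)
The plan is to convert the trichotomy in the preceding theorem into a purely combinatorial counting identity; once that classification is available the argument is just bookkeeping. First I would note that the preceding theorem partitions the $C_q$ critical $q$-simplices into three mutually exclusive and exhaustive classes according to the type of persistence pair containing them: the birth simplices of $q$-dimensional pairs with finite death (type $1$), the death simplices of $(q-1)$-dimensional pairs with finite death (type $2$), and the simplices paired with infinity (type $3$). Because the classification is exact, it suffices to count each class separately and add.

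Next I would match each class with one of the prescribed sets of pairs. For type $1$, the assignment $\sigma \mapsto (\sigma, \tau)$ sends a critical $q$-simplex to the unique persistence pair in which it is the birth simplex, giving a bijection onto $\hat{P}_q$, since $\hat{P}_q$ is by definition the set of pairs with $q$-dimensional birth simplex and finite death; hence this class has $\#\hat{P}_q$ elements. For type $2$, the assignment $\sigma \mapsto (\alpha^{(q-1)}, \sigma)$ sends $\sigma$ to the unique pair in which it is the death simplex; invoking Proposition \ref{1st_cell}(2), the death simplex of a $(q-1)$-dimensional pair has dimension $q$, so this is a bijection onto $\hat{P}_{q-1}$ and the class has $\#\hat{P}_{q-1}$ elements. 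For type $3$, the class is in bijection with $P_q \setminus \hat{P}_q$, whose cardinality is $\#P_q - \#\hat{P}_q = \beta_q$ by the identity recorded just before the corollary. Summing the three counts yields $C_q = \#\hat{P}_q + \#\hat{P}_{q-1} + \beta_q$.

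The one point that genuinely needs care, and hence the main obstacle, is verifying that these three assignments are honest bijections, which reduces to the fact that the persistence pairing is a partial matching on critical simplices: no critical simplex is the birth simplex of two distinct pairs, nor the death simplex of two distinct pairs. This is exactly the well-definedness guaranteed by the excellence hypothesis together with the elder rule, as observed in the discussion following the definition of a persistence pair; I would therefore only need to record that this well-definedness yields the injectivity and surjectivity of the three maps, after which the equation follows immediately.
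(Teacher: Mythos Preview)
Your proposal is correct and is essentially the argument the paper has in mind: the paper gives no explicit proof of this corollary, treating it as an immediate consequence of the trichotomy in the preceding theorem together with the identity $\beta_q = \#P_q - \#\hat{P}_q$ recorded just before the statement. Your write-up simply makes that implicit counting explicit, and the care you take with the bijections is appropriate but already subsumed by the ``exactly one'' clause of the trichotomy.
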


Moreover, the strong and  weak Morse inequalities
become more explicit as well.

\begin{corollary}
\label{new morse inequality 2}
Let $K$ be a simplicial complex, and the dimension of $K$ is $n$. 
Then 
\begin{equation*}
C_n- C_{n-1}+\cdots \pm C_0= \beta_n - \beta_{n-1} +\cdots \pm \beta_0.
\end{equation*} 

Also, 
for $i= 0,1,\cdots, n,$

\begin{equation*}
C_i - C_{i-1} + \cdots \pm C_0  = \beta_i -\beta_{i-1} + \cdots \pm \beta_0 + \# \hat{P}_i .
\end{equation*}

\end{corollary}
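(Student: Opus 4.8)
The plan is to derive both equations of Corollary~\ref{new morse inequality 2} directly from Corollary~\ref{new morse inequality 1}, so the main work has already been done: once we have $C_q = \beta_q + \#\hat{P}_{q-1} + \#\hat{P}_q$ for every $q$, both statements follow by purely formal manipulation of the alternating sums, with the telescoping of the $\#\hat{P}$-terms being the key mechanism.

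First I would write down the second (partial) equation. Substituting the expression for each $C_i$ from Corollary~\ref{new morse inequality 1} into the alternating sum $C_i - C_{i-1} + \cdots \pm C_0$, the contribution of the $\#\hat{P}$-terms is
\begin{equation*}
\sum_{k=0}^{i} (-1)^{i-k}\bigl(\#\hat{P}_{k-1} + \#\hat{P}_{k}\bigr).
\end{equation*}
With the convention $\#\hat{P}_{-1}=0$, consecutive terms cancel in pairs: the $\#\hat{P}_k$ coming from $C_{k}$ cancels the $\#\hat{P}_{(k+1)-1}=\#\hat{P}_k$ coming from $C_{k+1}$, since they appear with opposite signs. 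After this telescoping, the only surviving term is the top one, $\#\hat{P}_i$, arising from $C_i$. This leaves exactly $\beta_i - \beta_{i-1} + \cdots \pm \beta_0 + \#\hat{P}_i$, which is the claimed second identity. I would present this telescoping carefully, tracking the sign $(-1)^{i-k}$ on each pair so that it is visible which two copies of each $\#\hat{P}_k$ annihilate.

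For the first (full alternating sum) equation, I would specialize the second identity to $i=n$ and argue that the leftover $\#\hat{P}_n$ vanishes. Indeed, since $n=\dim K$ there are no $(n+1)$-dimensional simplices, so no $n$-dimensional homology class can ever die within the filtration; hence every element of $P_n$ is paired with infinity and $\hat{P}_n=\varnothing$, giving $\#\hat{P}_n=0$. Substituting into the $i=n$ case of the partial identity immediately yields $C_n - C_{n-1} + \cdots \pm C_0 = \beta_n - \beta_{n-1} + \cdots \pm \beta_0$.

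I expect the only delicate point to be bookkeeping rather than substance: one must fix the boundary conventions ($\#\hat{P}_{-1}=0$ and $\#\hat{P}_n=0$) and verify that the signs in the telescoping sum genuinely match, so that each $\#\hat{P}_k$ with $0\le k\le i-1$ cancels completely and only $\#\hat{P}_i$ remains. There is no topological obstacle here, since all the homological content is already packaged in Corollary~\ref{new morse inequality 1}; the remaining task is a clean, sign-sensitive induction or direct index manipulation, which I would phrase as a short computation rather than a separate lemma.
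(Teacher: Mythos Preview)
Your proposal is correct and follows essentially the same approach as the paper: substitute Corollary~\ref{new morse inequality 1} into the alternating sum, use $\#\hat{P}_{-1}=0$ so the $\#\hat{P}$-terms telescope to leave only $\#\hat{P}_i$, and then specialize to $i=n$ with $\#\hat{P}_n=0$ to obtain the first identity. Your write-up is in fact more explicit about the telescoping and the reason $\#\hat{P}_n=0$ than the paper's own proof.
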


\begin{proof}
We first prove the second equation.
For any $i= 0,1,\cdots, n,$
we calculate the left side of 
the equation using Corollary \ref{new morse inequality 1}.

\begin{align*} 
C_i- C_{i-1} + \cdots \pm C_0 &=(\beta_i + \#\hat{P}_{i-1} + \#\hat{P}_i )  \\
&- (\beta_{i-1} + \#\hat{P}_{i-2} + \#\hat{P}_{i-1} )  \\
&+\quad \quad \quad\vdots     \\
&\pm (\beta_0 + \#\hat{P}_{-1} + \#\hat{P}_0 ). \\
\end{align*}
Since 
$\#\hat{P}_{-1}  =0, $
the second equation holds.

For the first equation, suppose that $i=k$.
Then, since  $\#\hat{P}_n=0$,
the first equation holds, as well.
\end{proof}

%%%%%%%%%%%%%%%%%%%%%%%%%%%%%%%%%%%%%%%%%%%%%%%%%%%%%%%%

\section{Strong connection of discrete Morse functions on graphs}
In this section, we explore the relationship between multiple 
discrete Morse functions on a simplicial complex.
We begin this section by defining  
 connectedness between critical simplices 
  in different 
 gradient vector fields.
 The definition is initially
introduced in \cite{birth and death}. 
The connectedness relationship is a binary relationship (but not an
equivalence relationship)
 between two critical simplices with the same dimension
 in two distinct gradient vector fields.
Consequently,  the connection relationship between two simplices also 
reveals  the relationship between the corresponding
functions.
The precise definition is given as follows.

\begin{definition}{\cite{birth and death}.}
\label{def_connection}
Let $K$ be a simplicial complex,  and
$\{f_i\}$ be a family of discrete Morse functions on
$K$.
For each $i$,  let $V_i$ be the corresponding 
gradient vector field of $f_i$.
Suppose that
$\alpha$ and $\beta$
are $q$-dimensional simplices of $K$, and 
$\alpha$ is critical in $V_i$, 
and $\beta$ is critical in $V_j$, $j\neq i$.

We say that $\alpha$ is \textit{connected to} $\beta$ 
if there are a $q$-dimensional simplex $\sigma$
and gradient paths
$\alpha \stackrqarrow{V_i(q-1,q)}{} \sigma$
and 
$\sigma \stackrqarrow{V_j(q,q+1)}{} \beta.$
We denote by 
$\alpha \to \beta $,
if 
$\alpha$ is connected to $\beta$.
 We also define that 
$\alpha$ is \textit{strongly connected} to $\beta$ 
if $\alpha$ is connected to $\beta$
and $\beta$ is connected to $\alpha$,
denoted 
$\alpha \leftrightarrow \beta $.
\end{definition}

For more detailed information and applications of connectedness, 
one  refers to \cite{birth and death}.
Note that by definition, 
$\alpha \leftrightarrow \alpha$.
Additionally, if $\alpha$ is critical at both $V_i$ and $V_j$,
then $\alpha$ is strongly connected to itself 
by the trivial path.

Let $f_1,f_2$ be discrete Morse functions on $K$,
$\tilde{\sigma}_1$, $\tilde{\sigma}_2$  be $f_1$,$f_2$-critical simplex,
respectively. 
Given the concept of connectedness between critical 
simplices in two different gradient vector fields,
it is natural to consider the following problems:
\begin{enumerate}
\item Let $A_q^{f_1,f_2}(K)$ be the number of strongly connected pairs of 
$q$-dimensional simplices between $f_1$ and $f_2$.
Does $A_q^{f_1,f_2}(K)$ depend on $\beta_q(K)$?

\item Suppose that $\tilde{\sigma}_1$ is strongly connected to $\tilde{\sigma}_2$.
Under what conditions is the gradient path connecting them unique?

\item 
(Posed in \cite{birth and death}) 
Suppose that $\tilde{\sigma}_1$ is strongly connected to $\tilde{\sigma}_2$.
Under what conditions is $\tilde{\sigma}_1$  
 not strongly connected to any other simplices of $f_2$ except  $\tilde{\sigma}_2$?
\end{enumerate}

In this section,  we will  investigate the connectedness relationship in 
the case of simple graph and provide  solutions to the aforementioned problems. 
A  \textit{simple graph} $G$ is a graph  that 
does not contain loops or parallel edges, 
making it equivalent to a one-dimensional simplicial complex.
Discrete Morse theory on 
graphs
is not complicated, and 
several interesting and fundamental properties 
 are introduced in  Kozlov \cite{Discrete Morse Theory Kozlov} 
 and  Ayala et al \cite{Discrete Morse theory on graphs}.
 
 With regard to the connectedness 
 between $\tilde{\sigma}_1^{(q)}$ and $\tilde{\sigma}_2^{(q)}$
 on a graph,
 when $q=0$,
 $\tilde{\sigma}_1$ is  connected to $\tilde{\sigma}_2$
 if there is a gradient path
 $\tilde{\sigma}_1 \stackrqarrow{V_2(0,1)}{} \tilde{\sigma}_2$;
 when $q=1$,
 $\tilde{\sigma}_1$ is  connected to $\tilde{\sigma}_2$
 if there is a gradient path
 $\tilde{\sigma}_1 \stackrqarrow{V_1(0,1)}{} \tilde{\sigma}_2$;

\begin{remark}
To address the higher dimensional cases,
we plan to present another paper \cite{no.2}
to give solutions to the aforementioned problems.

\end{remark}

We begin with a simple case by assuming that
the discrete Morse functions
$f_1,f_2$ are  optimal. 
We define a discrete Morse function to be \textit{optimal (or perfect)} 
if, for each $n$, 
the number of $n$-dimensional critical simplices $C_n$
equals the 
$n$-th Betti number $\beta_n$
 \cite{Discrete Morse theory on graphs}.
 For $i=1,2$, we use $\tilde{v}_i,\tilde{e}_i$
to represent $0$-, $1$-dimensional $f_i$-critical simplices,
respectively.

\begin{proposition}
\label{optimal}

Let $G$ be a graph
and 
$$f_1, f_2 : G \to \mathbb{R}$$
be optimal discrete Morse functions,  $q=0,1$.
For each  $q$-dimensional homology class $ [  \alpha ]$,
suppose that $ [  \alpha ]$ is born at $\tilde{\sigma}^{(q)}_1$ in $f_1$  and  
$\tilde{\sigma}^{(q)}_2$ in  $f_2$.
Then, $\tilde{\sigma}_1$ is strongly connected to $\tilde{\sigma}_2.$
\end{proposition}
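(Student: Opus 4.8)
The plan is to use optimality to strip the problem down to pure gradient-flow combinatorics, exploiting that over a field a one-dimensional complex has $B_1=0$, so that $H_0$- and $H_1$-classes are honest vertices and cycles rather than equivalence classes. First I would record the consequence of optimality: feeding $C_q=\beta_q$ for $q=0,1$ into Corollary \ref{new morse inequality 1} forces $\#\hat P_0=\#\hat P_1=0$, so every critical simplex is of the third type in the classification, i.e.\ paired with infinity, and hence represents a homology class that persists to the end of each filtration. I would then handle $q=0$ and $q=1$ separately, in each case producing the two gradient paths that Definition \ref{def_connection} requires for strong connectedness.

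For $q=0$ the class $[\alpha]\in H_0$ is the class of one connected component $C$ of $G$, and by Proposition \ref{1st_cell} its birth vertices $\tilde v_1,\tilde v_2$ both lie in $C$; optimality makes $\tilde v_i$ the unique $V_i$-critical vertex of $C$. The one external fact I need here is the standard acyclicity and termination of a $(0,1)$ gradient flow: starting from any vertex the flow cannot loop and must stop at a critical vertex. Since $C$ has a unique $V_2$-critical vertex, the $V_2$-flow from $\tilde v_1$ terminates at $\tilde v_2$, giving $\tilde v_1\stackrqarrow{V_2(0,1)}{}\tilde v_2$, and symmetrically the $V_1$-flow yields $\tilde v_2\stackrqarrow{V_1(0,1)}{}\tilde v_1$. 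These are exactly the two connections of Definition \ref{def_connection}, so $\tilde v_1\leftrightarrow\tilde v_2$.

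For $q=1$ I would first use $B_1=0$ to identify $[\alpha]$ with a genuine $1$-cycle of $G$, so that its $f_1$- and $f_2$-representatives are one and the same chain; Proposition \ref{1st_cell}(3) then puts both $\tilde e_1$ and $\tilde e_2$ in the support of this cycle. Next I would use the spanning-forest structure of an optimal gradient vector field on a graph: the $V_i$-non-critical edges form a spanning forest $T_i$ whose complementary (critical) edges are exactly the essential edges, so the cycle carrying $[\alpha]$ is precisely the fundamental cycle of $\tilde e_1$ relative to $T_1$, namely $\tilde e_1$ together with the $T_1$-path joining its endpoints. Rooting $T_1$ at its critical vertex pairs every tree edge with its endpoint farther from the root, so the two arcs of this fundamental cycle are traced by honest $V_1$-gradient paths flowing upward from the two endpoints of $\tilde e_1$ to their meeting vertex. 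As $\tilde e_2$ lies on this cycle it lies on one of the two arcs and is reached by such a path, which gives $\tilde e_1\stackrqarrow{V_1(0,1)}{}\tilde e_2$; interchanging $f_1$ and $f_2$ gives $\tilde e_2\stackrqarrow{V_2(0,1)}{}\tilde e_1$, and therefore $\tilde e_1\leftrightarrow\tilde e_2$.

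I expect the main obstacle to be the combinatorial bookkeeping in the $q=1$ step. The substantive points are establishing that the non-critical edges really do form a spanning forest and that the induced pairing orients every tree edge consistently toward the unique critical vertex, and then checking that the $V_1$-path can be made to terminate exactly at $\tilde e_2$ with the correct orientation, ending with a pair $(\alpha_r,\tilde e_2)\in V_1$, rather than merely passing through the vertex beneath $\tilde e_2$. Acyclicity and termination of the flow underlie both cases and must be invoked carefully, but the genuine content is matching the two arcs of the fundamental cycle against the alternating vertex--edge pattern of a gradient $V$-path so that $\tilde e_1$ and $\tilde e_2$ appear as the first and last edges.
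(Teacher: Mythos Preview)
Your $q=0$ argument, via termination of the deterministic $V_2$-flow at the unique critical vertex of the component, is correct and somewhat cleaner than the paper's, which instead takes an arbitrary path from $\tilde v_1$ to $\tilde v_2$ and reroutes around each $f_2$-critical edge it meets using the cycle to which that edge belongs.

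There is a genuine gap in your $q=1$ step, and it is not the bookkeeping issue you anticipated. You assert that ``the cycle carrying $[\alpha]$ is precisely the fundamental cycle of $\tilde e_1$ relative to $T_1$'', but this is unjustified: since $[\alpha]$ is an \emph{arbitrary} class in $H_1$, its unique representative $z$ may contain more than one $f_1$-critical edge, in which case $z$ is a sum of several fundamental cycles rather than a single one, and $\tilde e_2$ need not lie on the $T_1$-path joining the endpoints of $\tilde e_1$. Concretely, take two triangles glued at a single vertex $w$, let $f_1$ make the two outer edges critical and $f_2$ make two of the spokes at $w$ critical, and let $[\alpha]$ be the sum of the two triangle cycles; with suitable values $\tilde e_1$ and $\tilde e_2$ sit in different triangles, and both $V_1$-arcs out of $\tilde e_1$ terminate at $w$ without ever reaching $\tilde e_2$. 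The paper's proof is equally thin at exactly this point---it simply asserts that ``there is always one gradient $f_1$-gradient path connecting $\tilde e_1$ and $\tilde e_2$''---so the defect is shared; your spanning-forest picture does establish the conclusion under the extra hypothesis that the representative cycle of $[\alpha]$ contains exactly one $f_1$-critical edge and exactly one $f_2$-critical edge.
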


\begin{proof}
We assume that $\tilde{\sigma}_1\neq\tilde{\sigma}_2$, as the result is 
trivial when $\tilde{\sigma}_1 = \tilde{\sigma}_2$.
Since $[\alpha]$ is born at $\tilde{\sigma}_1$ in $f_1$
and  at $\tilde{\sigma}_2$ in $f_2$,
 $\tilde{\sigma}_1$ and $\tilde{\sigma}_2$ are in persistent pairs 
 $(\tilde{\sigma}_1, \infty) $ and   $(\tilde{\sigma}_2, \infty) $ of $f_1$ and $f_2$, respectively.

When $q=0,$ to show $\tilde{v}_1 \to \tilde{v}_2,$ it suffices to
  show the existence of an $f_2$-gradient path connecting $\tilde{v}_1$ and $\tilde{v}_2$.
Since $f_2$ is optimal, there are no other
$(v,\infty)$ or $(v,e)$ type critical simplices in the connected component 
generated by $ [  \alpha ]$.
Therefore, it suffices to show that there is a path from $\tilde{v}_1$ to $\tilde{v}_2$ in which 
all $1$-dimensional simplices $e$ are non-critical.

Suppose that there is an $f_2$-critical simplex $e$ in the path.
Then,
 $(e,\infty)$ is an $f_2$-persistence pair, and we can always find an
alternative 
path that consists of the elements of the cycle and flows towards to $\tilde{v}_2$.
As $G$ is finite, we can repeat this procedure to obtain the desired gradient path. 

When $q=1,$ 
$\tilde{e}_1$ and $\tilde{e}_2$ belong to   the same cycle. 
Since $[\alpha]$ is born at $\tilde{e}_1$ in $f_1$,
$f_1(\tilde{e}_1)>f_1(\tilde{e}_2)$.
When $f_1$ is optimal, 
there is always  one  gradient $f_1$-gradient path connecting 
$\tilde{e}_1$ and $\tilde{e}_2$. 
Thus, $\tilde{e}_1 \to \tilde{e}_2$.

Therefore, since both $ f_1$ and $ f_2 $ are optimal, 
we have
$\tilde{v}_1 \leftrightarrow \tilde{v}_2$ and
$\tilde{e}_1 \leftrightarrow \tilde{e}_2$.
\end{proof}

\begin{corollary}
Let $G$ be a graph
and 
$$f_1, f_2 : G \to \mathbb{R}$$
be optimal discrete Morse functions.
Then,
\begin{enumerate}
\item $A_0^{f_1,f_2}(G)=\beta_0(G)$;
\item $A_1^{f_1,f_2}(G)\geq\beta_1(G)$.
\end{enumerate}

\end{corollary}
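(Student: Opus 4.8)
The plan is to combine Proposition~\ref{optimal} with the observation that optimality forces every critical simplex to be paired with infinity. Since $f_1$ and $f_2$ are optimal, $C_q=\beta_q$ for $q=0,1$, so Corollary~\ref{new morse inequality 1} yields $\#\hat P_{q-1}+\#\hat P_q=0$; as both summands are non-negative, $\#\hat P_q=0$ for every $q$. Hence neither function has a finite persistence pair, and each critical vertex (resp.\ critical edge) is the birth simplex of a class of $H_0$ (resp.\ $H_1$) that is paired with infinity. In particular each $f_i$ has exactly $\beta_0$ critical vertices and $\beta_1$ critical edges, and these are precisely the birth simplices of $H_0$ and $H_1$.

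For statement (1) I would first localize to connected components. Restricting $f_i$ to a component yields a discrete Morse function, whose number of critical vertices is at least that component's zeroth Betti number $1$ by Theorem~\ref{Morse_inequalities}; summing over the $\beta_0$ components and using $C_0=\beta_0$ shows that each component contains exactly one $f_1$-critical vertex $\tilde v_1$ and exactly one $f_2$-critical vertex $\tilde v_2$. These two vertices are the birth simplices of the component's unique surviving $H_0$-class in $f_1$ and $f_2$, so Proposition~\ref{optimal} gives $\tilde v_1\leftrightarrow\tilde v_2$, producing $\beta_0$ strongly connected pairs. For the reverse inequality I would observe that any strongly connected pair $(\tilde v_1,\tilde v_2)$ requires in particular a gradient path $\tilde v_1\stackrqarrow{V_2(0,1)}{}\tilde v_2$, and such a path never leaves a connected component; hence $\tilde v_1$ and $\tilde v_2$ lie in the same component, which carries only one critical vertex of each function. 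Thus every component contributes exactly one pair and $A_0^{f_1,f_2}(G)=\beta_0(G)$.

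For statement (2) I would list the $\beta_1$ critical edges $\tilde e_1^1,\dots,\tilde e_1^{\beta_1}$ of $f_1$. Each $\tilde e_1^i$ is the birth simplex of a one-dimensional class $[\alpha_i]$ of $f_1$; because $G$ is a graph this class never dies, so in the filtration of $f_2$ it is born at some edge $\tilde e_2^{(i)}$, which is $f_2$-critical by Propositions~\ref{1st_cell} and~\ref{codimension 1}. Proposition~\ref{optimal} then gives $\tilde e_1^i\leftrightarrow\tilde e_2^{(i)}$, so each $\tilde e_1^i$ sits in a strongly connected pair. Since the first coordinates $\tilde e_1^1,\dots,\tilde e_1^{\beta_1}$ are pairwise distinct, the resulting $\beta_1$ pairs are pairwise distinct, and therefore $A_1^{f_1,f_2}(G)\ge\beta_1(G)$.

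The bookkeeping with persistence pairs and the fact that $(0,1)$-gradient paths stay inside a single component are routine. The main subtlety I anticipate lies in statement (2): one should not expect equality, because a single $f_1$-critical edge may be strongly connected to several $f_2$-critical edges belonging to different cycles, so the assignment $\tilde e_1^i\mapsto\tilde e_2^{(i)}$ need not be injective. The point that rescues the lower bound is that the constructed pairs are distinguished already by their first coordinates, so injectivity of this assignment is not needed.
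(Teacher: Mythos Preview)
Your proof is correct and follows the same approach as the paper, which derives the corollary directly from Proposition~\ref{optimal} together with the observation that gradient paths cannot cross connected components. The paper's own proof is a single sentence addressing only statement~(1); your version simply fills in the routine details (invoking Corollary~\ref{new morse inequality 1}, localizing to components, and spelling out the construction of the $\beta_1$ distinct edge-pairs for statement~(2)).
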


\begin{proof}
The first equation holds 
because there are no paths connecting vertices in different components.

\end{proof}

Note that in general cases, 
$A_q^{f_1,f_2}$ may be smaller than $\beta_q$.

\begin{proposition}
\label{0-dim case}
Let $G$ be a graph
and 
$$f_1, f_2 : G \to \mathbb{R}$$
be discrete Morse functions.
Let $\tilde{v}_1 $ and $\tilde{v}_2 $ be
$0$-dimensional $f_1$- and $f_2$-critical simplices,  respectively.
If $\tilde{v}_1$ is strongly connected to $\tilde{v}_2$,
then $\tilde{v}_1$ is not strongly connected to other 
critical $f_2$-critical simplices and $\tilde{v}_2$ is not strongly connected
to other $f_1$-critical simplices.
Moreover, the gradient path  connecting  $\tilde{v}_1$ to $\tilde{v}_2$ 
and the gradient path connecting $\tilde{v}_2$ to $\tilde{v}_1$ are unique.
\end{proposition}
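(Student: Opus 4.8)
The plan is to reduce the whole statement to one structural observation: in dimension $0$ on a graph, gradient $V$-paths are \emph{deterministic}. I would first record this carefully. Since $G$ is one-dimensional every edge has exactly two vertices, and in a gradient vector field $V$ each vertex lies in at most one gradient pair. Hence, starting from a vertex $u_0$, a gradient $V$-path
$$u_0,\ \beta_0,\ u_1,\ \beta_1,\ \dots$$
is forced at every step: $\beta_i$ is the unique edge paired with $u_i$, and $u_{i+1}$ is the unique endpoint of $\beta_i$ other than $u_i$. Thus out of a fixed starting vertex there is exactly one maximal gradient $V$-path. Furthermore, the pairing inequality $f(u_i)\ge f(\beta_i)$ together with the discrete Morse condition at $\beta_i$ (which gives $f(\beta_i)>f(u_{i+1})$, since $u_{i+1}\ne u_i$ is the non-paired face of $\beta_i$) shows that the vertex values strictly decrease along the path. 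As $G$ is finite, the path is therefore simple and stops exactly at the first vertex that carries no pairing, i.e. at a $V$-critical vertex; consequently this terminal vertex is the \emph{only} $V$-critical vertex appearing on the path.

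Granting this, the first assertion is immediate. Suppose $\tilde{v}_1$ is connected to some $f_2$-critical vertex $\tilde{v}_2'$, so that there is a path $\tilde{v}_1 \stackrqarrow{V_2(0,1)}{} \tilde{v}_2'$. By determinism this path is an initial segment of the unique maximal $V_2$-flow issuing from $\tilde{v}_1$, and since $\tilde{v}_2'$ is $V_2$-critical it cannot be one of the (non-critical) interior vertices of that flow; hence it is the terminal vertex. The same applies to $\tilde{v}_2$. As the terminal vertex is unique, $\tilde{v}_2'=\tilde{v}_2$, so $\tilde{v}_1$ is connected---hence strongly connected---to no $f_2$-critical vertex other than $\tilde{v}_2$. (Only one direction of the hypothesis is used.) Exchanging the roles of $f_1$ and $f_2$ and running the maximal $V_1$-flow out of $\tilde{v}_2$, whose unique terminal $V_1$-critical vertex is $\tilde{v}_1$, gives the second assertion in exactly the same way.

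For the uniqueness of the two connecting paths I would again appeal to determinism: any gradient path $\tilde{v}_1 \stackrqarrow{V_2(0,1)}{} \tilde{v}_2$ is forced step by step, and because the vertex values strictly decrease the vertex $\tilde{v}_2$ occurs only once along the flow, so the path must be precisely the initial segment of the maximal flow terminating at $\tilde{v}_2$; this pins it down uniquely. The path from $\tilde{v}_2$ to $\tilde{v}_1$ under $V_1$ is unique for the same reason. I do not anticipate a genuine obstacle: the entire argument rests on the two elementary facts that a vertex sits in at most one gradient pair and that a graph edge has only two endpoints, which together collapse all the choices in a $0$-dimensional gradient path. The only points demanding care are the justifications that the flow cannot revisit a vertex and cannot run through a critical vertex without halting---the former from the strict decrease of $f$, the latter from the very definition of a critical (unpaired) vertex.
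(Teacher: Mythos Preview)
Your proposal is correct and follows essentially the same approach as the paper: both arguments rest on the observation that in dimension $0$ the $V$-flow out of a vertex is deterministic (the paper phrases this as ``the outdegree of each vertex is at most $1$''), so any two $V_2$-paths out of $\tilde{v}_1$ are nested and a critical vertex cannot sit in the interior of a gradient path. Your write-up is more careful than the paper's in justifying termination and simplicity of the flow via the strict decrease of $f$-values, and you correctly note that only one direction of the strong-connectedness hypothesis is actually needed.
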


\begin{proof}
Suppose $\tilde{v}_1 \leftrightarrow \tilde{v}_2$.
The second statement simply follows from the fact that
the outdegree of each vertex is at most $1$.
Hence, if there exists an $f_2$-critical $\tilde{v}_2'$ such that
$\tilde{v}_1 \to \tilde{v}_2'$,
then the gradient path $\gamma_{1,2'}:\tilde{v}_1 \stackrqarrow{V_2(0,1)}{} \tilde{v}_2' $
is either a sub-path of  
$\gamma_{1,2}:\tilde{v}_1 \stackrqarrow{V_2(0,1)}{} \tilde{v}_2 $,
or it contains $\gamma_{1,2'} $.
Both cases contradict with the definition of gradient path, because there is no
critical simplex on the gradient path.
\end{proof}

From the proof above we can see that
there may be two different $f_1$-critical vertices
$v_1$, $v_1'$ connecting to 
one $f_2$-critical vertex $v_2$,
but $v_2$ can not be connected to both of them.
In Proposition \ref{optimal}, we assumed the functions to be 
optimal.
Next, we remove the conditions on $f_1$ and $f_2$
and instead 
impose restrictions on the graph $G$
 by assuming that $G$ is a forest.

\begin{lemma}
Let $G$ be a  forest, 
and 
$$f_1, f_2 : G \to \mathbb{R}$$
be discrete Morse functions, $q=0,1$.
If $\tilde{\sigma}^{(q)}_1$ is strongly connected to
$\tilde{\sigma}^{(q)}_2$,
then they are not strongly connected to other 
critical simplices of $f_2$ and  $f_1$, respectively.
Moreover,  the gradient path  connecting  $\tilde{\sigma}^{(q)}_1$ 
and $\tilde{\sigma}^{(q)}_2$ 
to the gradient path connecting $\tilde{\sigma}^{(q)}_2$
to $\tilde{\sigma}^{(q)}_1$ are unique.
\end{lemma}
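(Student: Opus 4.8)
The plan is to treat the two dimensions separately. For $q=0$ there is essentially nothing new: the relation ``$\tilde\sigma_1$ is connected to $\tilde\sigma_2$'' is realized by a $V_2$- (resp.\ $V_1$-) gradient path of type $(0,1)$, and Proposition~\ref{0-dim case}, which is proved for an \emph{arbitrary} graph, already yields both uniqueness of the strong connection and uniqueness of the two gradient paths. Since a forest is a graph, I would simply invoke it. All the content therefore lies in the case $q=1$, and I may assume $\tilde e_1\neq\tilde e_2$ (equal simplices being handled by the trivial path). The new difficulty compared with $q=0$ is that a gradient path issuing from a \emph{critical edge} can first descend to either of its two endpoints, so the outdegree-$1$ argument of the vertex case fails at the very first step; this is precisely where acyclicity of the forest must enter.

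The first step is a combinatorial claim I would isolate: in a forest, every edge-to-edge gradient $V$-path $\tilde e \stackrqarrow{V(0,1)}{} e'$ projects to a simple path of the underlying tree. Writing it as $\tilde e=\beta_0,\alpha_1,\beta_1,\dots,\alpha_r,\beta_r=e'$ with $(\alpha_i,\beta_i)\in V$, uniqueness of the vertex--edge pairing forces $\beta_{i+1}\neq\beta_i$, so the walk never repeats an edge consecutively; since two distinct edges of a tree meeting at a vertex have distinct opposite endpoints, we also get $\alpha_{i-1}\neq\alpha_{i+1}$, i.e.\ the walk never backtracks. A non-backtracking walk in a tree cannot repeat a vertex (a repetition would close a cycle), so the projection is a simple path, and it coincides with the unique tree geodesic between its endpoints. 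In particular a gradient path in a forest never revisits a cell.

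Uniqueness of the gradient path is then immediate. Deleting the critical edge $\tilde e_1=ab$ from its tree component splits it into two subtrees $T_a\ni a$ and $T_b\ni b$ whose only joining edge is $\tilde e_1$. A $V_1$-path from $\tilde e_1$ must first descend to $a$ or to $b$; being simple and never reusing $\tilde e_1$, it afterwards stays inside $T_a$, respectively $T_b$, and after this initial choice every edge is forced by its pairing and every vertex is the opposite endpoint, so the path is deterministic. As $\tilde e_2\neq\tilde e_1$ lies in exactly one of $T_a,T_b$, only one initial direction reaches it, giving a unique path; the same argument for $f_2$ gives uniqueness of the reverse $V_2$-path.

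Finally, for uniqueness of the strong connection, suppose $\tilde e_1$ is strongly connected to two $f_2$-critical edges $\tilde e_2\neq\tilde e_2'$, with $V_1$-paths $P,P'$ reaching them. If $\tilde e_2,\tilde e_2'$ lie in different subtrees $T_a,T_b$, then the return $V_2$-paths $\tilde e_2\stackrqarrow{V_2(0,1)}{}\tilde e_1$ and $\tilde e_2'\stackrqarrow{V_2(0,1)}{}\tilde e_1$ are tree geodesics reaching $\tilde e_1$ through the vertices $a$ and $b$ respectively, so their final pairings are $(a,\tilde e_1)\in V_2$ and $(b,\tilde e_1)\in V_2$; but an edge is $V_2$-paired with at most one of its endpoints, a contradiction. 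If instead $\tilde e_2,\tilde e_2'$ lie in the same subtree, then $P,P'$ share their initial direction, so both lie on one deterministic path; taking $\tilde e_2$ nearer to $\tilde e_1$, the return path from $\tilde e_2'$ is the geodesic through $\tilde e_2$, forcing $\tilde e_2$ to occur as an intermediate cell and hence be $V_2$-paired, contradicting its $f_2$-criticality. Both cases being excluded, $\tilde e_2=\tilde e_2'$, and exchanging $f_1,f_2$ gives the symmetric statement. I expect the main obstacle to be exactly this last point: one-directional connectedness really is non-unique, so the proof must combine the \emph{return} path with two rigidity facts of the forest---a tree edge has a unique $V_2$-partner, and a gradient path cannot pass through a critical edge---which is where acyclicity is indispensable.
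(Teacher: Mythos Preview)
Your proof is correct and follows essentially the same route as the paper. Both arguments invoke Proposition~\ref{0-dim case} for $q=0$ and, for $q=1$, split on whether the two outgoing $V_1$-paths leave $\tilde e_1$ through the same face or through different faces: in the first case the return $V_2$-path from the farther target passes through the nearer $f_2$-critical edge, contradicting its criticality; in the second case both faces of $\tilde e_1$ would have to be $V_2$-paired with it, which is impossible. The only difference is cosmetic: you make explicit the underlying geometric fact that a gradient path in a forest projects to the unique tree geodesic (hence is simple, deterministic after the first step, and confined to one subtree $T_a$ or $T_b$), whereas the paper compresses this into the single sentence ``since $G$ is a forest, the return paths have to be the reverse paths'' and calls the uniqueness of the gradient paths ``trivial.''
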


\begin{proof}
The case  $q=0$ follows from Proposition 4.5.
  When $q=1$,
assume that 
$\tilde{e}_1 \leftrightarrow \tilde{e}_2$ and
$\tilde{e}_1 \to \tilde{e}_2'$.
We aim to  prove  $\tilde{e}_2' \not \to \tilde{e}_1$.
Suppose, for contradiction, that $\tilde{e}_2' \to \tilde{e}_1$.

Since 
$\tilde{e}_1 \to \tilde{e}_2$ and 
$\tilde{e}_1 \to \tilde{e}_2'$,
we have two gradient paths 
$\gamma_{1,2}:\tilde{e}_1 \stackrqarrow{V_1(0,1)}{} \tilde{e}_2 $ and 
$\gamma_{1,2'}:\tilde{e}_1 \stackrqarrow{V_1(0,1)}{} \tilde{e}_2' $.
These paths either start from the same non-critical face of $\tilde{e}_1$ or 
from to two different non-critical faces  of $\tilde{e}_1$.
Since $G$ is a forest,
gradient paths $\gamma_{2,1}:\tilde{e}_2 \stackrqarrow{V_2(0,1)}{} \tilde{e}_1 $ and 
$\gamma_{2',1}:\tilde{e}_2' \stackrqarrow{V_2(0,1)}{} \tilde{e}_1$
have to be the reverse paths
of $\gamma_{1,2}$ and 
$\gamma_{1,2'}$,
respectively.

In the former case, 
either $\gamma_{1,2}$ or $\gamma_{1,2'}$ is a sub-path of the other. 
This contradicts the definition of a gradient path.
In the latter case, it
 contradicts the fact that $\tilde{e}_1$ 
cannot be part of both gradient pairs associated with its two faces.

The uniqueness of gradient paths is trivial.

\end{proof}

Note that in the above lemma,  although we 
suppose that  $G$ is a forest, 
we can relax the condition to 
``the subgraph
consisting of all paths connecting $\tilde{\sigma}_1 $ and $\tilde{\sigma}_2 $ does not contain cycles".

\begin{proposition}
\label{cyclic}
Let $G$ be a  cyclic graph, 
and 
$$f_1, f_2 : G \to \mathbb{R}$$
be discrete Morse functions.
Then, the following statements hold.

\begin{enumerate}
\item If $\sigma_1$ is connected to $\sigma_2$,
then the gradient path connecting them is unique.
\item $A_0^{f_1,f_2}(G)=A_1^{f_1,f_2}(G).$

\end{enumerate}

\end{proposition}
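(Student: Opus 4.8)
The plan is to treat the two statements separately, both exploiting the rigidity of a gradient field on a cycle: each vertex is paired with at most one of its two incident edges and each edge with at most one of its two endpoints, so a $(0,1)$-gradient path is forced step-by-step once one of its ends is fixed. For statement (1) with $\sigma_1=\tilde{v}_1$ a vertex, the connecting path lies in $V_2$ and starts at $\tilde{v}_1$; at every vertex there is at most one outgoing arrow and the next vertex (the opposite endpoint of the paired edge) is then determined, so at most one gradient path issues from $\tilde{v}_1$ and the path to $\tilde{v}_2$ is unique, exactly as in Proposition \ref{0-dim case}. When $\sigma_1=\tilde{e}_1$ is an edge the forward path may branch at the first step, since it can descend to either endpoint of $\tilde{e}_1$; here I would instead trace \emph{backward} from the terminal edge $\tilde{e}_2$. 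As $\tilde{e}_2$ is reached it is a non-critical edge of $V_1$, hence paired with a unique vertex that fixes the penultimate vertex, and every interior vertex of such a path is non-critical and so has a unique incoming edge. Thus the whole path is reconstructed uniquely from its endpoint, and since $G$ is finite and a gradient path cannot close up the reconstruction terminates; uniqueness follows in both cases.

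For statement (2) the first task is to fix the canonical shape of a gradient field $V_i$ on the cycle. Deleting its critical edges cuts the cycle into arcs; inside an arc every edge is non-critical and is therefore paired with one of its two endpoints, both of which lie in the same arc, so exactly one vertex of each arc is left unpaired. This vertex is the unique critical vertex of the arc and the common sink toward which the arc flows. Hence the critical vertices and critical edges of each $f_i$ \emph{alternate} around the cycle and are equal in number, and ``the next critical edge clockwise'' defines a bijection $\nu_i$ from the $V_i$-critical vertices to the $V_i$-critical edges. Invoking the determinism of (1), I would then note that strong connection is a partial matching in each dimension: $\tilde{v}_1$ is connected only to the single $V_2$-sink of its $V_2$-basin, and dually each non-critical edge is reached from a unique critical edge, so $A_0^{f_1,f_2}(G)$ and $A_1^{f_1,f_2}(G)$ are the sizes of two matchings. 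Concretely, a vertex pair $(c_1,c_2)$ is strong exactly when $c_1$ and $c_2$ lie in one common $V_1$-arc with sink $c_1$ and one common $V_2$-arc with sink $c_2$, and an analogous ``mutual source'' condition describes the strong edge pairs.

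The decisive step, which I expect to be the main obstacle, is to show that the shift $(c_1,c_2)\mapsto(\nu_1(c_1),\nu_2(c_2))$ carries strongly connected vertex pairs exactly onto strongly connected edge pairs. Granting this, $\nu_1\times\nu_2$ becomes a bijection between the two matchings and $A_0^{f_1,f_2}(G)=A_1^{f_1,f_2}(G)$ follows immediately, in agreement with $\chi(G)=0$. To prove the equivalence I would analyse, for a strong vertex pair, how the relevant $V_1$- and $V_2$-arcs overlap on the circle: the clockwise boundary edge $\nu_1(c_1)$ of the $V_1$-arc of $c_1$ should flow in $V_1$ across the overlap to the edge $\nu_2(c_2)$, and symmetrically the boundary edge $\nu_2(c_2)$ should flow in $V_2$ back to $\nu_1(c_1)$. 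The delicate points are that a single global orientation must serve both functions at once and that the forward and backward reachabilities have to transfer simultaneously under the same shift; I would control these by a careful bookkeeping of how the two alternating families of critical cells interleave around the cycle, checking that no critical edge of the opposite field separates the sink from the boundary edge assigned to it.
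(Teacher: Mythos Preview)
Your argument for statement~(1) is correct and, for the edge case, arguably cleaner than the paper's: where the paper observes that one of the two arcs joining $\tilde e_1$ and $\tilde e_2$ must contain the essential $V_1$-vertex and hence cannot carry a gradient path, you trace backward from $\tilde e_2$, using that on a cycle every non-critical vertex has a unique incoming edge. Both arguments work.

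For statement~(2), however, there is a genuine gap: the map $(c_1,c_2)\mapsto(\nu_1(c_1),\nu_2(c_2))$ does \emph{not} carry strong vertex pairs to strong edge pairs, for either choice of orientation. Take the $6$-cycle on $v_1,\dots,v_6$; let $V_1$ have the single critical edge $e_{12}$ and single critical vertex $v_4$ (so both halves of the remaining path flow toward $v_4$), and let $V_2$ have critical edges $e_{12},e_{34},e_{56}$ and critical vertices $v_2,v_4,v_6$ (each $V_2$-arc consisting of two vertices, with the even-indexed one as sink). The unique strong vertex pair is $(v_4,v_4)$ and the unique strong edge pair is $(e_{12},e_{12})$, so indeed $A_0=A_1=1$. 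But the $V_2$-arc of $v_4$ is $\{v_4,e_{45},v_5\}$, bounded by $e_{34}$ and $e_{56}$, so $\nu_2(v_4)\in\{e_{34},e_{56}\}$ whichever direction you pick; and one checks directly that neither $(e_{12},e_{34})$ nor $(e_{12},e_{56})$ is strong (in each case the return $V_2$-path from the candidate edge gets absorbed at a nearby $V_2$-sink before reaching $e_{12}$). The obstruction your last paragraph hopes to exclude---a critical edge of the opposite field separating the sink from its assigned boundary edge---occurs here unavoidably, so no amount of bookkeeping will rescue this particular bijection.

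The paper's proof proceeds quite differently: it lists the $f_1$-critical cells in cyclic order, cuts the cycle into the maximal intervals between a consecutive $f_1$-critical vertex and $f_1$-critical edge, and for each interval performs a short case analysis (on whether the first and last $f_2$-critical cells in that interval are vertices or edges) recording how the interval contributes to $A_0$, to $A_1$, and to the count of $f_2$-critical vertices minus $f_2$-critical edges. Summing over all intervals and invoking $C_0^{f_2}(G)=C_1^{f_2}(G)$ then yields $A_0=A_1$. Your observation that strong connection is a partial matching in each dimension is correct on a cycle and is a good start, but to finish along your lines you would need a pairing rule that depends on how the two alternating families interleave, not merely on the arc structure of each field separately.
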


\begin{proof}
Statement (1) is trivial because there are always two paths connecting 
$\tilde{e}_1 $ and $\tilde{e}_2 $,
and one of them contains $\tilde{v}_1 $, 
where $(\tilde{v}_1 ,\infty)$ is a persistence pair of $f_1$,
hence it is not a gradient path.
Additionally,  Proposition 4.5 gives a proof to the case $q=0.$

To prove  statement (2),  suppose that  
$ (\tilde{v}_1,\infty)$ are $ (\tilde{e}_1,\infty)$ 
are persistence pairs for the cycle
in $f_1$.
Without loss of  generality, we 
order the $f_1$-critical simplices by considering the paths
connecting $ \tilde{e}_1$ and $ \tilde{v}_1$ as follows $(n,m\geq 1)$:
$$\gamma_1: \tilde{e}_1,v_1^1, e_1^1, \ldots,  v_{n}^1, e_{n}^1,\tilde{v}_1,$$
and
$$\gamma_2: \tilde{e}_1, v_1^{2}, e_1^2, \ldots,  v_{m}^2, e_{m}^2,\tilde{v}_1.$$
It is easy to check that the statement holds for the cases  $n=0$ or $m=0$.
Note that  $1$-dimensional critical simplices  always  appear after $0$-dimensional critical simplices
in the above sequence.

For any interval containing the endpoints starting from $v_i$ to $e_i$ in $\gamma_1$ or $\gamma_2$, 
 the $f_1$ flow is either trivial when $v_i \prec e_i$,  or is from $e_i$ to $v_i$ otherwise.
Similarly, the same  result holds when  considering the interval starting from $e_i$ to $v_{i+1}$,
 $\tilde{e}_1$ to $v_1^1$ and  $e_{n}^1$ to $\tilde{v}_1.$
We investigate the types of $f_2$-critical simplices and the variation of 
$A_q^{f_1,f_2}(G)$ in the interval.
Let  $\overline{v}$ and  $\overline{e}$ be $0$- and $1$-dimensional $f_2$-critical 
simplices, respectively.

\begin{enumerate}
\item When there is only one $f_2$-critical $1$-dimensional simplex $\overline{e}$ in the 
interval,
we have $e_i \leftrightarrow \overline{e}$, and there is no other strong connection in the 
interval.
 Hence, $A_1^{f_1,f_2}$ increases by one, and 
 $A_0^{f_1,f_2}$ does not change.
 
 \item When  there is only one $f_2$-critical $0$-dimensional simplex $\overline{v}$ in the 
interval,
we have $v_i \leftrightarrow \overline{v}$, and there is no other strong connection in the 
interval.
 Hence, $A_0^{f_1,f_2}$ increases by one, and 
 $A_1^{f_1,f_2}$ does not change.

 \item Suppose that there are multiple $f_2$-critical simplices with
  $\overline{e}$ being the first one and $\overline{v}$  being the last one in the interval.
 Then, $e_i \leftrightarrow \overline{e}$, and $v_i \leftrightarrow \overline{v}$.
 Hence, both $A_1^{f_1,f_2}$ and  $A_0^{f_1,f_2}$ increase by one.

 \item Suppose that there are multiple $f_2$-critical simplices with
  $\overline{v}$ being the first one and $\overline{e}$ 
   being the last one in the interval.
 Then,  there is no strong connection in the interval.
 Hence, neither $A_1^{f_1,f_2}$  nor  $A_0^{f_1,f_2}$ change.

 \item Suppose that there are multiple $f_2$-critical simplices with
  $\overline{e}$ being the first one and $\overline{e'}$
    being the last one in the interval.
 Then, $e_i \leftrightarrow \overline{e}$.
 Hence, $A_1^{f_1,f_2}$ increases by one, and 
 $A_0^{f_1,f_2}$ does not change.
 
 \item Suppose that there are multiple $f_2$-critical simplices with
  $\overline{v}$ being the first one and $\overline{v'}$  being the last one in the interval.
 Then, $v_i \leftrightarrow \overline{v}$.
 Hence, $A_0^{f_1,f_2}$ increases by one, and 
 $A_1^{f_1,f_2}$ does not change.

\end{enumerate}

Since $G$ is a cyclic graph, $C_0^{f_2}(G)=C_1^{f_2}(G).$
In cases (1), (5),  the number of  $1$-dimensional $f_2$-critical  
simplices contained in the interval is one more than $0$-dimensional $f_2$-critical  
simplices contained in the interval.
In cases (2), (6), the number of  $0$-dimensional $f_2$-critical  
simplices contained in the interval is one more than $1$-dimensional $f_2$-critical  
simplices contained in the interval.
In cases (3), (4), the number of  $0$-dimensional $f_2$-critical  
simplices contained in the interval is the same as $1$-dimensional $f_2$-critical  
simplices contained in the interval.

Hence, 
if we sum up all the intervals, then the 
sum of cases (1) and (5)  must equal  the sum of cases (2) and (6).
Therefore,  the difference between 
$A_0^{f_1,f_2}(G)$ and $A_1^{f_1,f_2}(G)$ equal the difference between 
$C_0^{f_2}(G)$ and $C_1^{f_2}(G)$, 
and
$A_0^{f_1,f_2}(G)=A_1^{f_1,f_2}(G).$
\end{proof}

Proposition 4.7 can be generalized by considering the set of gradient paths connecting 
$1$-dimensional critical simplices and $0$-dimensional critical simplices.
In \cite{Discrete Morse theory on graphs}, the \textit{rooted tree} of a 
discrete Morse function on $G$ is introduced to study the
structure of the gradient vector field. 
Moreover, in \cite{chari}, 
the rooted tree is applied to investigate the 
complex of discrete Morse functions. 
We cite the definition as follows.
For a detailed explanation 
and comprehensive discussion on the rooted tree and its applications, 
the reader may refer 
Section 4 of  \cite{Discrete Morse theory on graphs}.

\begin{definition}
Let $G$ be a  graph, 
and 
$f: G \to \mathbb{R}$
be discrete Morse functions.
Given any $0$-critical element $v$,  
the set of all gradient paths 
rooted in it 
forms a tree called the\textit{ tree rooted in v},  denoted as $T^f_v$.
When the context is clear,  we omit $f$.

Since any two of such rooted trees are disjoint,
we call the set consisting of all rooted trees $\{T_v\}$  the \textit{rooted forest}.
The rooted forest can be obtained by 
removing all critical edges of $f$.

\end{definition}

The tree rooted in $v$ contains  
two sets of gradient paths: the gradient paths
 that originate from the faces of $1$-dimensional critical simplices to $v$
and the gradient paths that do not start from any face of 
$1$-dimensional critical simplices.
The former set is denoted $\{\overline{\gamma_{e, v}}\}_e,$ where
$e$ is a $1$-dimensional critical simplex.
On the other hand,  the latter set is denoted $\overline{\gamma_{v}}$.
 $\overline{\gamma_{v}}$ consists of branches ending at 
$v$, with each branch represented $B_i$, $i=1,2\ldots n$. 
Note that the branch set may be empty  for some graphs,
for example, the cyclic graphs we consider above.
Hence, the branch set is also the main difficulty to generalize the proof
for cyclic graphs to general graphs.

We call the union $\overline{\gamma_{e, v}}\cup \overline{\gamma_{v}}$,
which is a subgraph of $T_v$,
the\textit{ tree of $e$ rooted in $v$},
denoted $T_{e, v}$.

\begin{example}
Figure \ref{f1} depicts 
an example of a discrete Morse function, denoted as $f_1$,
on  graph $G$.
The gradient vector field on $G$ 
is represented by 
arrows.
 Critical simplices 
 within the graph, 
 marked in red, 
 consist of three $0$-dimensional simplices $v_1^1,v_1^2,v_1^3$,
and three $1$-dimensional simplices $e_1^1,e_1^2,e_1^3$.

The rooted tree of vertex $v_1^2$ is illustrated in 
Figure \ref{rooted tree}.
The rooted tree comprises
 gradient paths $\overline{\gamma_{e_1^1, v_1^2}},$ 
 $\overline{\gamma_{e_1^2, v_1^2}}$,
  $\overline{\gamma_{e_1^3, v_1^2}}$
along with an additional branch. 
Note that  $\overline{\gamma_{e_1^3, v_1^2}}$
is contained in $\overline{\gamma_{e_1^1, v_1^2}}$ and  
 $\overline{\gamma_{e_1^2, v_1^2}}$.
\end{example}

\begin{figure}[ht]

\centering

\begin{tikzpicture}[>=stealth, thick, scale=1.25,
  decoration={markings, mark=at position 0.5 with {\arrow{>}}}
]
% Original Cyclic Graph with 6 vertices
\foreach \i in {1,...,6} {
    \pgfmathsetmacro{\angle}{360/6*\i}
    \node[draw, circle, fill=black, inner sep=2pt] (A\i) at (\angle:2) {};
  }
  \foreach \i [remember=\i as \lasti (initially 6)] in {1,...,6} {
    \ifnum\i=4 
      \ifnum\lasti=3
        \draw[postaction={decorate}] (A\lasti) -- (A\i); % Reverse edge 34
      \fi
    \else
      \ifnum\i=5 
        \ifnum\lasti=4
          \draw[red] (A\lasti) -- (A\i) node[midway, above,black] {$e_1^2$} ; % Make edge 45 red
        \fi
      \else
        \ifnum\i=6
          \ifnum\lasti=5
            \draw[red] (A\i) -- (A\lasti) node[midway, above,black] {$e_1^3$}; % Make edge 56 red
          \fi
        \else
          \draw[postaction={decorate}] (A\lasti) -- (A\i); % Other edges
        \fi
      \fi
    \fi
  }

  % Determine the vertical position of vertex 2 and 4
  \path let \p1 = (A2) in \pgfextra{\xdef\yTwo{\y1}};
  \path let \p1 = (A4) in \pgfextra{\xdef\yFour{\y1}};

  % Adding vertices 7, 8, 9 and edges 78, 89, 92 (connected to vertex 2)
\foreach \i in {7,8,9} {
    \pgfmathsetmacro{\xcoord}{-5.5 + (\i - 7) * 1.5} % Horizontal placement of new vertices
    \def\nodecolor{black}
    \ifnum\i=7
      \def\nodecolor{red} % Make vertex 7 red
      \node at (\xcoord, \yTwo - 0.3cm) {$v_1^1$};
    \fi
    \node[draw, circle, fill=\nodecolor, inner sep=2pt] (A\i) at (\xcoord,\yTwo) {};
  }

 \draw[red] (A7) -- (A8) node[midway, above,black] {$e_1^1$}; % Edge 78 with label e_1^1
  \draw[postaction={decorate}] (A8) -- (A9);
  \draw[postaction={decorate}] (A9) -- (A2); % Connecting vertex 9 to vertex 2

  % Adding vertices 10, 11, 12 and edges 1011, 1112, 124 (connected to vertex 4)
  \foreach \i in {10,11,12} {
    \pgfmathsetmacro{\xcoord}{-7 + (\i - 9) * 1.5} % Horizontal placement of new vertices relative to vertex 4
    \def\nodecolor{black}
    \ifnum\i=12
      \def\nodecolor{red} % Make vertex 10 red
      \node at (\xcoord, \yFour - 0.3cm) {$v_1^2$};
    \fi
    \node[draw, circle, fill=\nodecolor, inner sep=2pt] (A\i) at (\xcoord,\yFour) {};
  }

  \draw[postaction={decorate}] (A10) -- (A11);
  \draw[postaction={decorate}] (A11) -- (A12);
  \draw[postaction={decorate}] (A4) -- (A12); % Connecting vertex 12 to vertex 4
  
  % Adding vertices 13, 14, 15 and edges 1314, 1415, 125 (connected to vertex 4)
  \foreach \i in {13,14,15} {
    \pgfmathsetmacro{\xcoord}{-3.5 + (\i - 9) * 1.5} % Horizontal placement of new vertices relative to vertex 4
    \def\nodecolor{black}
    \ifnum\i=15
      \def\nodecolor{red} % Make vertex 15 red
      \node at (\xcoord, \yFour - 0.3cm) {$v_1^3$};
    \fi
    \node[draw, circle, fill=\nodecolor, inner sep=2pt] (A\i) at (\xcoord,\yFour) {};
  }

  \draw[postaction={decorate}] (A14) -- (A15); % Edge 1415 without arrow, red
  \draw[postaction={decorate}] (A13) -- (A14);
  \draw[postaction={decorate}] (A5) -- (A13); % Connecting vertex 12 to vertex 4
\end{tikzpicture}
\caption{Graph $G$ with discrete Morse function $f_1$.}
\label{f1}
\end{figure}
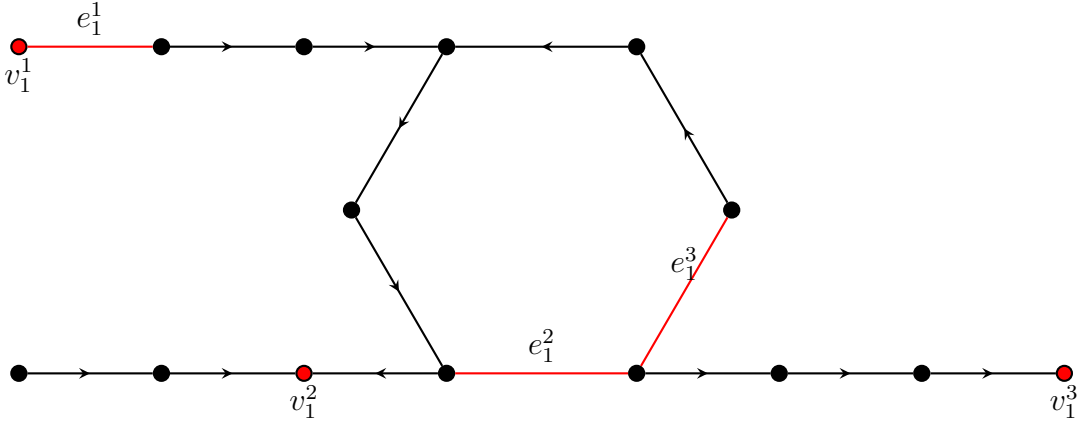

\begin{theorem}
\label{main theorem}
Let $G$ be a  graph, 
and 
$$f_1, f_2 : G \to \mathbb{R}$$
be discrete Morse functions.
Then, 
$$A_0^{f_1,f_2}(G)- A_1^{f_1,f_2}(G) =\beta_0(G)- \beta_1(G)=\chi(G),$$
where $\chi(G)$ is the  Euler characteristic of the graph $G$.

\end{theorem}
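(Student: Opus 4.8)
The plan is to prove the two equalities in turn. The second one, $\beta_0(G)-\beta_1(G)=\chi(G)$, I would treat as the standard Euler--Poincar\'e identity for a one-dimensional complex (equivalently, Theorem \ref{Morse_inequalities}(3) specialized to $\dim G=1$), so the real content is the first equality. Since $f_2$ is itself a discrete Morse function on $G$, Theorem \ref{Morse_inequalities}(3) gives $\chi(G)=C_0^{f_2}(G)-C_1^{f_2}(G)$, and hence it suffices to prove
\[ A_0^{f_1,f_2}(G)-A_1^{f_1,f_2}(G)=C_0^{f_2}(G)-C_1^{f_2}(G). \]
This is exactly the identity that the proof of Proposition \ref{cyclic} establishes when $G$ is a single cycle, so the whole strategy is to generalize that interval-counting argument from one cycle to an arbitrary graph by replacing the two paths $\gamma_1,\gamma_2$ with the rooted forest of $f_1$.

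First I would fix the rooted forest of $f_1$ introduced above: deleting the $f_1$-critical edges decomposes $G$ into the trees $\{T_v\}$ rooted at the $f_1$-critical vertices, and each $f_1$-critical edge $e$, together with the rooted trees of its two endpoints, assembles into a subgraph $T_{e,v}$. These subgraphs are the direct analogues of the paths $\gamma_1,\gamma_2$ in Proposition \ref{cyclic}: every vertex and edge of $G$ lies on a gradient path of $f_1$ running toward a critical vertex, possibly after passing a critical edge. I would also record at the outset that, by Proposition \ref{0-dim case}, strong connection matches each $0$-dimensional $f_1$-critical simplex to at most one $0$-dimensional $f_2$-critical simplex, so that $A_0^{f_1,f_2}(G)$ genuinely counts such strongly connected pairs and no pair is double counted.

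The core step then repeats, almost verbatim, the case analysis from the proof of Proposition \ref{cyclic}. Orienting each maximal gradient path of $f_1$ inside the rooted forest and cutting it at its critical vertices and critical edges produces intervals, and within each interval I would run the same six-case classification of the $f_2$-critical simplices it contains, according to whether the first and last critical simplex of $f_2$ in the interval is a vertex or an edge. Case by case one checks that the contribution of the interval to $A_0^{f_1,f_2}-A_1^{f_1,f_2}$ equals its contribution to $C_0^{f_2}-C_1^{f_2}$; summing over all intervals inside all subgraphs $T_{e,v}$ would then yield the displayed identity.

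The main obstacle is precisely the phenomenon that is absent in the cyclic case and that the text flags just before the statement: the branches $\overline{\gamma_v}$, that is, the portions of a rooted tree that terminate at an $f_1$-critical vertex without ever meeting an $f_1$-critical edge. In a cycle every gradient path of $f_1$ runs between $\tilde e_1$ and $\tilde v_1$, so no branches occur and the interval bookkeeping closes up; in general one must show that the $f_2$-critical simplices lying on a branch contribute equally to the two sides of the reduced identity. Since a branch is itself a tree, I expect to handle it with the acyclic analysis of the forest lemma above (uniqueness of gradient paths together with the matching property on trees), reducing each branch to the already-understood acyclic situation. Assembling these local balances over the branches and the $T_{e,v}$ into one global count, without over- or under-counting a critical simplex that is shared by several rooted trees, is the delicate part on which the argument turns.
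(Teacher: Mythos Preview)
Your plan is essentially the paper's own argument: reduce to $A_0-A_1=C_0^{f_2}-C_1^{f_2}$, decompose $G$ into the $f_1$-rooted forest, run the six-case interval analysis of Proposition~\ref{cyclic} along the paths $\overline{\gamma_{e,v}}$, and treat the branches $\overline{\gamma_v}$ separately as the new ingredient. The paper handles the branches not via the forest lemma but simply by appending two further cases (the $f_2$-critical sequence on a branch always begins with a vertex, so one only needs the two subcases according to whether it ends with a vertex or an edge), and your over-counting worry dissolves because the rooted trees $T_v$ are pairwise disjoint by construction.
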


\begin{proof}
Let $v^1$ and $e^1$ be $0$-, $1$-dimensional $f_1$-critical simplices,
respectively.
Following the approach used in the proof of Proposition \ref{cyclic},
we consider the $f_2$-critical sequence of  simplices in 
the tree of $e^1$ rooted in $v^1$
$T^{f_1}_{e^1, v^1}=\overline{\gamma_{e^1, v^1}}\cup \overline{\gamma_{v^1}}$.
 If $e^1$ is also  $f_2$-critical, then  
 $e^1\leftrightarrow e^1$ via trivial paths.
Note that if there is any $f_2$-critical edge $e^2$ in $\overline{\gamma_{e^1, v^1}}$,
then $e^1\to e^2$.
Also,  if there is any $f_2$-critical edge $e^2$ in $\overline{\gamma_{v^1}}$,
then there is no $f_1$-critical edge that is connected to $e^2$.
Finally,  if there is any $f_2$-critical vertex $v^2$ in $T_{e^1, v^1}$,
then $v^2\to v^1$.

First, we discuss  the branch set $\overline{\gamma_{v^1}}$. 
 Let $ B_i$ be a branch starting at $v_i^1$ ending at $v^1$.
We consider the sequence 
of $f_2$-critical simplices between 
$v_i^1$ and $v^1$, always 
beginning with a $0$-dimensional simplex.
We introduce two additional cases in addition to 
the cases (1) to (6) from the proof of  Proposition 4.7.
\begin{enumerate}\setcounter{enumi}{6}
\item When the sequence of $f_2$-critical simplices starts at a 
$0$-dimensional simplex and ends at a 
$1$-dimensional simplex, 
neither $A_0$ nor $A_1$  changes.
\item When the sequence of $f_2$-critical simplices starts at a $0$-dimensional simplex 
ends at a 
$0$-dimensional simplex, 
$A_0$ increases by one while $A_1$ does not change.

\end{enumerate}
 
Next, we  modify the cases (1) to (6) from the proof of  Proposition 4.7 for
$\overline{\gamma_{e^1, v^1}}$.
In the context of  a general graph $G$,
an $f_2$-critical  simplex 
$e_2$ may  not be connected to $e^1$,
even if $e_2$ is the first 
  $f_2$-critical  edge of the $f_2$-critical
  sequence in $\overline{\gamma_{e^1, v^1}}$.
  Also, 
  the $f_2$-critical
  sequence may consists of consecutive 
   $f_2$-critical edges or even all $f_2$-critical edge, such as
  $e_2^1, e_2^2, \ldots, e_2^k$.
  
Suppose that $e_2$ is the first   $f_2$-critical  edge of the $f_2$-critical sequence
in $\overline{\gamma_{e^1, v^1}}$,
and $e_2$ is not connected to $e_1$.
The $f_2$-gradient flow then transitions from 
one or two faces of $e_2$ to one or two branches of 
$\overline{\gamma_{v^1}}$, respectively.
 This case is addressed by the branch case described above.

It is straightforward to  check that for each
 case in the tree $T^{f_1}_{e^1, v^1}$,
the variation in
$A_0^{f_1,f_2}(G)-A_1^{f_1,f_2}(G)$ equals the
 difference between the number of  $f_2$-critical 
 vertices and the number of  $f_2$-critical edges.
We can sum up the result by varying $e^1$ to obtain the result
for the rooted tree $T_{v_1}^{f_1}$.
Since any two of such rooted trees 
 are disjoint, 
 aggregating the individual results of
each rooted tree leads to the conclusion that
 $$A_0^{f_1,f_2}(G)- A_1^{f_1,f_2}(G) =\beta_0(G)- \beta_1(G)=\chi(G),$$
the  Euler characteristic of $G$.
\end{proof}

%%%%%%%%%%%%%%%%%%%%%%%%%%%%%%%%%%%%%%%%%%%%%%%%%%%%%%%%%%%%

\begin{example}
We present an example to elucidate Theorem \ref{main theorem}.
Consider another discrete Morse function, denoted as $f_2$,
applied to the graph $G$ shown in Figure \ref{an example}.
In this instance, 
there are four $f_2$-critical
$0$-dimensional simplices $v_2^1,v_2^2,v_2^3,v_2^4$,
and 
four $f_2$-critical
 $1$-dimensional simplices $e_2^1,e_2^2,e_2^3,e_2^4$ of $f_2$.

Upon examination, 
one can identify the following strong connections:
$v_1^1 \leftrightarrow v_2^1$,
$v_1^2 \leftrightarrow v_2^2$,
$v_1^3 \leftrightarrow v_2^4$,
$e_1^1 \leftrightarrow e_2^1$,
$e_1^3 \leftrightarrow e_2^2$,
$e_1^3 \leftrightarrow e_2^4$.
This example
 notably demonstrates a scenario 
 where one critical simplex 
 is strongly 
 connected to two distinct critical simplices. 
Therefore, 
$A_0^{f_1,f_2}(G)- A_1^{f_1,f_2}(G)=3-3=\beta_0(G)- \beta_1(G)=1-1=0=\chi(G)$.
\end{example}

\begin{figure}[ht]

\centering

\begin{tikzpicture}[>=stealth, thick, scale=1.25,
  decoration={markings, mark=at position 0.5 with {\arrow{>}}}
]
% Original Cyclic Graph with 6 vertices
\foreach \i in {1,...,6} {
    \pgfmathsetmacro{\angle}{360/6*\i}
    \node[draw, circle, fill=black, inner sep=2pt] (A\i) at (\angle:2) {};
     \ifnum\i=5 
      \node[fill=white] (A\i) at (\angle:2) {};
      \fi
  }
  \foreach \i [remember=\i as \lasti (initially 6)] in {1,...,6} {
    \ifnum\i=4 
      \ifnum\lasti=3
        \draw[postaction={decorate}] (A\lasti) -- (A\i); % Reverse edge 34
      \fi
    \else
      \ifnum\i=5 
        \ifnum\lasti=4
          \draw[white] (A\lasti) -- (A\i) node[midway, above,white] {$e_2^2$} ; % Make edge 45 red
        \fi
      \else
        \ifnum\i=6
          \ifnum\lasti=5
            \draw[white] (A\i) -- (A\lasti) node[midway, above,white] {$e_2^3$}; % Make edge 56 red
          \fi
        \else
          \draw[postaction={decorate}] (A\lasti) -- (A\i); % Other edges
        \fi
      \fi
    \fi
  }

  % Determine the vertical position of vertex 2 and 4
  \path let \p1 = (A2) in \pgfextra{\xdef\yTwo{\y1}};
  \path let \p1 = (A4) in \pgfextra{\xdef\yFour{\y1}};

  % Adding vertices 7, 8, 9 and edges 78, 89, 92 (connected to vertex 2)
\foreach \i in {8,9} {
    \pgfmathsetmacro{\xcoord}{-5.5 + (\i - 7) * 1.5} % Horizontal placement of new vertices
    \def\nodecolor{black}
    \node[draw, circle, fill=\nodecolor, inner sep=2pt] (A\i) at (\xcoord,\yTwo) {};
  }

 \draw[white] (A7) -- (A8) node[midway, above,white] {$e_2^1$}; % Edge 78 with label e_1^1
  \draw[postaction={decorate}] (A8) -- (A9);
  \draw[postaction={decorate}] (A9) -- (A2); % Connecting vertex 9 to vertex 2

  % Adding vertices 10, 11, 12 and edges 1011, 1112, 124 (connected to vertex 4)
  \foreach \i in {10,11,12} {
    \pgfmathsetmacro{\xcoord}{-7 + (\i - 9) * 1.5} % Horizontal placement of new vertices relative to vertex 4
    \def\nodecolor{black}
    \ifnum\i=12
      \def\nodecolor{red} % Make vertex 10 red
      \node at (\xcoord, \yFour - 0.3cm) {$v_1^2$};
    \fi
    \node[draw, circle, fill=\nodecolor, inner sep=2pt] (A\i) at (\xcoord,\yFour) {};
  }

  \draw[postaction={decorate}] (A10) -- (A11);
  \draw[postaction={decorate}] (A11) -- (A12);
  \draw[postaction={decorate}] (A4) -- (A12); % Connecting vertex 12 to vertex 4

\end{tikzpicture}
\caption{The rooted tree $T_{v_1^2}^{f_1}$.}
\label{rooted tree}
\end{figure}
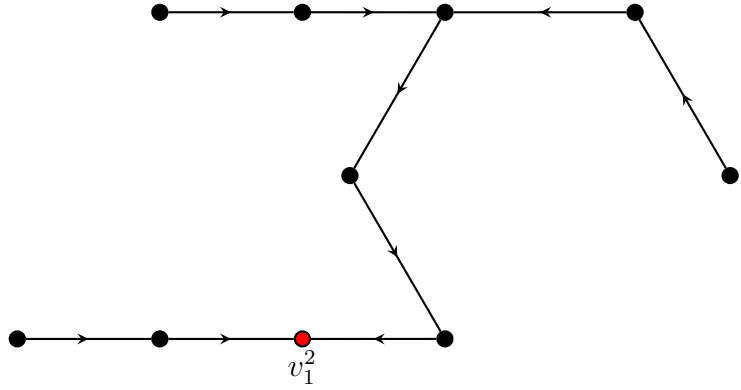

\begin{figure}[ht]

\centering

\begin{tikzpicture}[>=stealth, thick, scale=1.25,
  decoration={markings, mark=at position 0.5 with {\arrow{>}}}
]
 % Original Cyclic Graph with 6 vertices
\foreach \i in {1,...,6} {
    \pgfmathsetmacro{\angle}{360/6*\i}
    \def\nodecolor{black}
    \ifnum\i=1
      \def\nodecolor{red}
    \fi
    \node[draw, circle, fill=\nodecolor, inner sep=2pt] (A\i) at (\angle:2) {};
    % Renaming specific vertices
    \ifnum\i=1
      \node at (\angle:2.3) {$v_2^2$};
    \fi
  }
  \foreach \i [remember=\i as \lasti (initially 6)] in {1,...,6} {
    \ifnum\i=2 
      \ifnum\lasti=1 
        \draw[postaction={decorate}] (A\i) -- (A\lasti); % Reverse edge 12
      \fi
    \else
      \ifnum\i=3 
        \ifnum\lasti=2 
          \draw[postaction={decorate}] (A\i) -- (A\lasti); % Reverse edge 23
        \fi
      \else
        \ifnum\i=4 
          \ifnum\lasti=3
            \draw[red] (A\lasti) -- (A\i) node[midway, above,black] {$e_2^2$};  % Edge 34 without arrow, red
          \else
            \draw[postaction={decorate}] (A\lasti) -- (A\i); % Other edges
          \fi
        \else
          \draw[postaction={decorate}] (A\lasti) -- (A\i); % Other edges
        \fi
      \fi
    \fi
  }

  % Determine the vertical position of vertex 2 and 4
  \path let \p1 = (A2) in \pgfextra{\xdef\yTwo{\y1}};
  \path let \p1 = (A4) in \pgfextra{\xdef\yFour{\y1}};

  % Adding vertices 7, 8, 9 and edges 78, 89, 92 (connected to vertex 2)
\foreach \i in {7,8,9} {
    \pgfmathsetmacro{\xcoord}{-5.5 + (\i - 7) * 1.5} % Horizontal placement of new vertices
    \def\nodecolor{black}
    \ifnum\i=7
      \def\nodecolor{red} % Make vertex 7 red
      \node at (\xcoord, \yTwo - 0.3cm) {$v_2^1$};
    \fi
    \node[draw, circle, fill=\nodecolor, inner sep=2pt] (A\i) at (\xcoord,\yTwo) {};
  }

 \draw[red] (A7) -- (A8) node[midway, above,black] {$e_2^1$}; % Edge 78 with label e_1^1
  \draw[postaction={decorate}] (A8) -- (A9);
  \draw[postaction={decorate}] (A9) -- (A2); % Connecting vertex 9 to vertex 2

  % Adding vertices 10, 11, 12 and edges 1011, 1112, 124 (connected to vertex 4)
  \foreach \i in {10,11,12} {
    \pgfmathsetmacro{\xcoord}{-7 + (\i - 9) * 1.5} % Horizontal placement of new vertices relative to vertex 4
    \def\nodecolor{black}
    \ifnum\i=10
      \def\nodecolor{red} % Make vertex 10 red
      \node at (\xcoord, \yFour - 0.3cm) {$v_2^3$};
    \fi
    \node[draw, circle, fill=\nodecolor, inner sep=2pt] (A\i) at (\xcoord,\yFour) {};
  }

  \draw[red] (A10) -- (A11) node[midway, above,black] {$e_2^3$};  % Edge 1011 without arrow, red
  \draw[postaction={decorate}] (A11) -- (A12);
  \draw[postaction={decorate}] (A12) -- (A4); % Connecting vertex 12 to vertex 4
  
  % Adding vertices 13, 14, 15 and edges 1314, 1415, 125 (connected to vertex 4)
  \foreach \i in {13,14,15} {
    \pgfmathsetmacro{\xcoord}{-3.5 + (\i - 9) * 1.5} % Horizontal placement of new vertices relative to vertex 4
    \def\nodecolor{black}
    \ifnum\i=15
      \def\nodecolor{red} % Make vertex 15 red
      \node at (\xcoord, \yFour - 0.3cm) {$v_2^4$};
    \fi
    \node[draw, circle, fill=\nodecolor, inner sep=2pt] (A\i) at (\xcoord,\yFour) {};
  }

  \draw[red] (A15) -- (A14) node[midway, above,black] {$e_2^4$}; % Edge 1415 without arrow, red
  \draw[postaction={decorate}] (A14) -- (A13);
  \draw[postaction={decorate}] (A13) -- (A5); % Connecting vertex 12 to vertex 4
\end{tikzpicture}
\caption{ Graph $G$ with discrete Morse function $f_2$.}
\label{an example}
\end{figure}
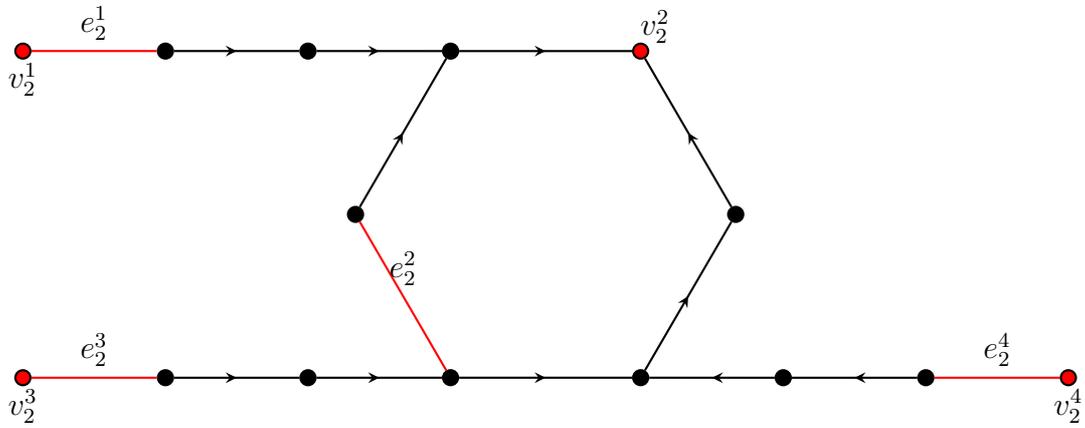

\newpage

%%%%%%%%%%%%%%%%%%%%%%%%%%%%%%%%%%%%%%%%%%%%%%%%%%%%%%%%%%%%

\begin{acknowledgement*}

I express my 
gratitude to the 
editors and referees 
for their hard work and 
insightful feedback, 
which are pivotal in enhancing the
 quality of the manuscript. 
 Special thanks are 
 due to Prof. Toru Ohmoto (Waseda Univ.) 
 and Prof. Akira Koyama (Waseda Univ.) 
 for their exceptional guidance and 
 invaluable support throughout this series of work.
\end{acknowledgement*}

%%%%%%%%%%%%%%%%%%%%%%%%%%%%%%%%%%%%%%%%%%%%%%%%%%%%%%%%%%%%

\newpage

\end{document}